\newcommand{\Aff}{\mathbb{A}}
\newcommand{\F}{\mathbb{F}}
\newcommand{\PP}{\mathbb{P}}
\newcommand{\Q}{\mathbb{Q}}
\newcommand{\Z}{\mathbb{Z}}
\newcommand{\calE}{\mathcal{E}}
\DeclareMathOperator{\Jac}{Jac}
\DeclareMathOperator{\rank}{rank}
\DeclareMathOperator{\rk}{rk}
\DeclareMathOperator{\Sel}{Sel}
\newcommand{\To}{\longrightarrow}
\numberwithin{equation}{section}
\newtheorem{theorem}[equation]{Theorem}
\newtheorem{lemma}[equation]{Lemma}
\newtheorem{corollary}[equation]{Corollary}
\newtheorem{proposition}[equation]{Proposition}
\theoremstyle{definition}
\newtheorem{remark}[equation]{Remark}
\definecolor{darkgreen}{rgb}{0,0.5,0}
\begin{document}

\title[Rational diophantine quintuples]%
      {Diagonal genus~5 curves, elliptic curves over $\Q(t)$, \\ and rational diophantine quintuples}
\subjclass[2010]{11D09, 11G05, 11G30, 14G05, 14H40; 11Y50, 14G25, 14G27, 14H25, 14H52}
\keywords{Diophantine quintuples, rational points, elliptic curves}

\author{Michael Stoll}
\address{Mathematisches Institut,
         Universit\"at Bayreuth,
         95440 Bayreuth, Germany.}
\email{Michael.Stoll@uni-bayreuth.de}
\urladdr{http://www.computeralgebra.uni-bayreuth.de}

\date{\today}

\begin{abstract}
  The problem of finding all possible extensions of a given rational diophantine quadruple
  to a rational diophantine quintuple is equivalent to the determination of the set
  of rational points on a certain curve of genus~5 that can be written as an intersection
  of three diagonal quadrics in~$\PP^4$. We discuss how one can (try to) determine the
  set of rational points on such a curve. We apply our approach to the original question
  in several cases. In particular, we show that Fermat's diophantine quadruple $(1,3,8,120)$
  can be extended to a rational diophantine quintuple in only one way, namely by~$777480/8288641$.

  We then discuss a method that allows us to find the Mordell-Weil group of an elliptic
  curve~$E$ defined over the rational function field~$\Q(t)$ when $E$ has full $\Q(t)$-rational
  $2$-torsion. This builds on recent results of Dujella, Gusi\'c and Tadi\'c. We give several
  concrete examples to which this method can be applied. One of these results implies that there is only
  one extension of the diophantine quadruple $\bigl(t-1,t+1,4t,4t(4t^2-1)\bigr)$ over~$\Q(t)$.
\end{abstract}

\maketitle


\section{Introduction}

This paper is mainly concerned with two computational questions in arithmetic
algebraic geometry. The first is how one can attempt to find the set of rational
points on a `diagonal' curve~$C$ of genus~$5$ over~$\Q$. i.e., a curve obtained
as the smooth intersection of three diagonal quadrics in~$\PP^4$. This problem has been
studied by Gonz\'alez-Jim\'enez and Xarles~\cites{GJX2014,GJ2015}. Our contribution
here is to provide a more conceptual geometric framework for their approach and to
suggest some improvements to the resulting algorithm. This is done in
Section~\ref{S:genus5}; the algorithm is given at the end of that section.
Some important features that we
use are the splitting of the Jacobian~$J$ of~$C$ as a product of five elliptic curves
over~$\Q$, up to isogeny, and the fact that $J$ has a large rational $2$-torsion
subgroup, which allows us to consider many \'etale double coverings of~$C$.
The Prym varieties of these coverings are isogenous (over~$\bar{\Q}$) to a product
of four elliptic curves defined (in general) over a biquadratic number field. This allows us
to set up various ways of applying `Elliptic Curve Chabauty'~\cite{Bruin2003} to our situation.

The second question is how one can find generators of the Mordell-Weil group
of an elliptic curve~$E$ over the rational function field~$\Q(t)$, in the case when
$E$ has full $\Q(t)$-rational $2$-torsion (or at least one $\Q(t)$-rational point of order~$2$).
Here we build on earlier work by Gusi\'c and~Tadi\'c~\cites{GusicTadic2012,GusicTadic2015},
which in turn uses an idea of Dujella~\cite{Dujella2000}. This is done
in Section~\ref{S:EQt}; the resulting algorithm is given at the end of that section.

We then apply our algorithms to questions related to rational diophantine $m$-tuples.
A \emph{diophantine $m$-tuple} is an $m$-tuple $(a_1, \ldots, a_m)$ of distinct
nonzero integers such that $a_i a_j + 1$ is a square for all $1 \le i < j \le m$.
A \emph{rational diophantine $m$-tuple} is an $m$-tuple of distinct nonzero rational
numbers with the same property. A recent result by He, Togb\'e and
Ziegler~\cite{HeTogbeZiegler} establishes that no diophantine quintuple exists.
On the other hand, it has been shown that there are infinitely many rational diophantine
sextuples~\cite{DKMS2017}, but no rational diophantine septuples are known.
See Andrej Dujella's ``Diophantine $m$-tuples'' page~\cite{Dujella} for
more information on background and results regarding diophantine $m$-tuples.

Given a (rational) diophantine quadruple $(a_1, a_2, a_3, a_4)$, we can ask
in how many ways it can be extended to a (rational) diophantine quintuple
by adding a number $a_5 \notin \{0, a_1, a_2, a_3, a_4\}$ such that
\[ a_i a_5 + 1 \quad \text{is a square} \quad \text{for all $1 \le i \le 4$.} \]
Replacing $a_5$ by~$x$, we obtain the system of equations
\[ a_1 x + 1 = u_1^2\,, \quad a_2 x + 1 = u_2^2\,, \quad a_3 x + 1 = u_3^2\,, \quad a_4 x + 1 = u_4^2 \]
describing a curve in~$\Aff^5$, whose integral (or rational) points with
$x \neq 0, a_1, a_2, a_3, a_4$ correspond to the solutions of our problem
(more precisely, we obtain $16$~points for each solution, depending on how we
choose the signs of the square roots~$u_i$). This curve is irreducible and smooth;
it has genus~$5$. Eliminating~$x$, it is given by the three quadrics
\[ a_i u_4^2 - a_4 u_i^2 - (a_i - a_4) = 0\,, \quad i = 1,2,3 \,; \]
by homogenizing these equations we obtain a description of the smooth projective
model~$C$ of the curve as an intersection of three diagonal quadrics in~$\PP^4$.
We call such a curve a \emph{diagonal curve of genus~$5$}. Note that by Faltings'
theorem~\cite{Faltings1983}, $C$ has only finitely many rational points, hence our original problem
has only finitely many solutions for any given quadruple.
We can then use the algorithm of Section~\ref{S:genus5} to try
to determine the set $C(\Q)$ of rational points on~$C$.
We apply this general approach to the extension problem of diophantine
quadruples. More precisely, we can show that a number of quadruples from the family
\[ \bigl(t-1, t+1, 4t, 4t(4t^2-1)\bigr) \]
can be extended in exactly one way (the ``regular'' extension, which exists for
every quadruple).

This prompts the question whether the regular extension is the only extension
that exists generically for quadruples in the family above. To answer this question,
we have to study the situation over the rational function field~$\Q(t)$.
It turns out that it is sufficient to determine the group~$E(\Q(t))$
for a specific elliptic curve~$E$ over~$\Q(t)$.
This is where we use the algorithm developed in Section~\ref{S:EQt}.
The result allows us to show that
the regular extension is the only extension to a diophantine quintuple over~$\Q(t)$.
We also show that $J(\Q(t))$ is generated by the differences of the $32$ $\Q(t)$-rational
points on the associated genus~$5$ curve~$C$ (where $J$ is again the Jacobian of~$C$)
and determine the structure of the group.

\subsection*{Acknowledgments}
I would like to thank Andrej Dujella for the motivating question that led to this
work and also for numerous helpful exchanges related to it. I also thank
Ivica Gusi\'c for pointing out various minor errors in earlier versions of this paper
and an anonymous referee for some useful comments.
The computations were done using the Magma Computer Algebra System~\cite{Magma}.


\section{Diagonal genus 5 curves} \label{S:genus5}

Let $K$ be a field not of characteristic~$2$.
The canonical model of a non-hyperelliptic curve~$C$ of genus~5 over~$K$ is an
intersection of three quadrics in~$\PP^4$. If the coordinates on~$\PP^4$
can be chosen so that the quadrics are diagonal, then we say that
$C$ is a \emph{diagonal genus~5 curve}. We denote suitable coordinates
by $u_0, \ldots, u_4$. Assume that we can define $C$ in~$\PP^4$ by
\[ \sum_{j=0}^4 a_j u_j^2 = \sum_{j=0}^4 b_j u_j^2 = \sum_{j=0}^4 c_j u_j^2 = 0 \]
and let $M$ be the $3 \times 5$ matrix whose rows are $(a_0, \ldots, a_4)$,
$(b_0, \ldots, b_4)$, $(c_0, \ldots, c_4)$. Then the condition for
the curve defined in this way to be smooth (and hence of genus~$5$) is
that no $3 \times 3$ minor of~$M$ vanishes. This is equivalent to saying
that the net of quadrics generated by the three quadrics above
does not contain quadrics of rank~$2$ or less. There are then exactly
ten quadrics of rank~$3$, which are obtained by eliminating two of
the variables.

Any such curve~$C$ has a subgroup~$A$
isomorphic to $(\Z/2\Z)^4$ in its automorphism group, which is generated
by the five involutions~$\sigma_j$ changing the sign of~$u_j$ and
leaving the other coordinates fixed; their product is the identity.

\begin{remark}
  The (coarse) moduli space of diagonal genus~$5$ curves has dimension~$2$:
  Fixing the coordinates, any such curve is given by a $3$-dimensional space
  of diagonal quadratic forms in five variables. The non-vanishing of the
  $3 \times 3$-minors implies that we can write the equations in the form
  \[ u_2^2 = a u_0^2 + b u_1^2, \quad
     u_3^2 = a' u_0^2 + b' u_1^2, \quad
     u_4^2 = a'' u_0^2 + b'' u_1^2,
  \]
  where the vectors $(a, b)$, $(a', b')$ and~$(a'', b'')$ are linearly independent
  in pairs. Over an algebraically closed field, we can then scale the variables
  to make $a = a' = a'' = b = 1$; we are left with two parameters $b', b''$
  that are different from $0$, $1$ and from each other. Permuting the variables
  induces an action of the symmetric group~$S_5$ on the open subset of~$\PP^2$
  given by the pairs $(b', b'')$; the quotient is the moduli space.

  We note that this is consistent with the expectation derived from the
  observation that the locus of Jacobians of diagonal genus~$5$ curves in
  the moduli space of principally polarized abelian $5$-folds of dimension~$15$
  is contained in the intersection of the space of Jacobians of dimension~$12$
  and the locus of abelian varieties isogenous to a product of five elliptic
  curves of dimension~$5$.

  Over a non-algebraically closed field~$k$, we have to take into account the
  twists coming from the action of~$A$, which leads to a parameterization
  by a subset of $k^2 \times (k^\times/k^{\times 2})^4$ (modulo the action of~$S_5$).
\end{remark}

Eliminating~$u_j$ leads to a double cover $\pi_j \colon C \to F_j$, where $F_j$ is
a curve of genus~$1$ given as the intersection of two diagonal quadrics
in~$\PP^3$. Write $E_j$ for the Jacobian elliptic curve of~$F_j$;
then $E_j$ has all its $2$-torsion points defined over~$K$.
(This is because $F_j$ is an intersection of diagonal quadrics,
which implies that the four singular quadrics in the pencil are
all defined over the ground field.)
We write $J$ for the Jacobian variety of~$C$. The maps $C \to F_j$ induce
homomorphisms $\pi_{j,*} \colon J \to E_j$, which we can combine into an isogeny
$\varphi \colon J \to \prod_{j=0}^4 E_j$ (see for example~\cite{Bremner1997}).
Pulling back divisors under~$\pi_j$ induces $\pi_j^* \colon E_j \to J$ such that
the composition $\pi_{j,*} \circ \pi_j^*$ is multiplication by~$2$ on~$E_j$.
The compositions $\pi_{j,*} \circ \pi_i^*$ for $i \neq j$ must be
zero, since generically the elliptic curves $E_i$ and~$E_j$ are not
isogenous. Combining the $\pi_j^*$ into
$\hat{\varphi} \colon \prod_{j=0}^4 E_j \to J$, $(P_j)_j \mapsto \sum_j \pi_j^*(P_j)$,
then gives an isogeny in the other direction such that $\varphi \circ \hat{\varphi}$
is multiplication by~$2$ on the product of the~$E_j$; this implies that
$\hat{\varphi} \circ \varphi$ is multiplication by~$2$ on~$J$.

We can describe part of the $2$-torsion subgroup~$J[2]$ in the following
way. Eliminating $u_i$ and~$u_j$ results in a conic~$Q_{ij}$; we have
the following commutative diagram:
\[ \xymatrix{ & C \ar[dl]_{\pi_i} \ar[dr]^{\pi_j} \\
              F_i \ar[dr] & & F_j \ar[dl] \\
              & Q_{ij}
            }
\]
Pulling back any point on~$Q_{ij}$ results in an effective divisor~$D_{ij}$ of
degree~$4$ on~$C$ such that $2D_{ij}$ is in the hyperplane class.
(If the point on~$Q_{ij}$ is the image of $P \in C$, then
$D_{ij} = (P) + (\sigma_i(P)) + (\sigma_j(P)) + (\sigma_i \sigma_j(P))$.)
The class of~$D_{ij}$ does not depend on the chosen point, since all
points on~$Q_{ij}$ are linearly equivalent. So any difference
$T_{\{\{i,j\},\{i',j'\}\}} = [D_{ij} - D_{i'j'}] \in J$ is in~$J[2]$.
Taking $[D_{01}]$ (say) as a base-point, we obtain nine generators
of the form $T_{ij} = T_{\{\{0,1\},\{i,j\}\}}$ (with $\{i,j\} \neq \{0,1\}$)
for our subgroup. There are some relations among these points, though.
On~$F_i$, the sum of the four divisors obtained from the~$Q_{ij}$
with $j \neq i$ is in twice the hyperplane class, which implies that
\[ \sum_{j \neq i} T_{ij} = 0 \qquad \text{for each $i \in \{0,\ldots,4\}$.} \]
Since four of these relations imply the fifth, we get a subgroup~$G$ of~$J[2]$ whose
$\F_2$-dimension is at most~$5$. Checking a concrete example shows
that~$G$ is indeed of that dimension.
One can check that $G \subset \ker(\varphi)$. Conversely, one can check
that $\bigoplus_j E_j[2]$ maps surjectively onto~$G$ under~$\hat{\varphi}$.
This implies that the kernel of~$\hat{\varphi}$ is of dimension~$5$
and that $G = \ker{\varphi}$.

Since the class of a point on~$Q_{ij}$
is defined over~$K$, so is the class of~$D_{ij}$, and therefore $G$ consists
of $K$-rational points. If $C(K)$ is non-empty
or if $K$ is a number field and $C$ has points over all completions of~$K$,
then all the conics have $K$-points, and  we can choose the divisors~$D_{ij}$
to be defined over~$K$.

Now assume that $K = \Q$ and that $P_0 \in C(\Q)$. Usually the differences
of the images under~$\pi_i$ of the points in the orbit of~$P_0$ under~$A$
will generate a group of rank~$1$ (changing two signs of~$u_j$ with $j \neq i$
corresponds to adding a $2$-torsion point, but changing one sign and
taking the difference with~$\pi_i(P_0)$ will usually give a point of
infinite order). So the rank of each~$E_i(\Q)$ will be positive, and the
rank of~$J(\Q)$ will be at least~$5$; in particular, standard Chabauty
techniques are not applicable, since no factor of~$J$ will have rank
strictly less than its dimension. The purpose of the following is to
describe an approach that can possibly succeed in determining~$C(\Q)$.
It is based on covering collections and `Elliptic Curve Chabauty'~\cite{Bruin2003}.
For general information on approaches for determining the set of
rational points on a curve, see also the survey~\cite{Stoll2011}.
In the case at hand, what we do follows the ideas underlying recent
work by Gonz\'alez-Jim\'enez and Xarles~\cites{GJX2014,GJ2015};
here we provide a more conceptual geometric framework.

Since by assumption $P_0$ is a rational point on~$C$, we have rational
points $\pi_j(P_0)$ on the curves~$F_j$. We then obtain
isomorphisms $F_j \stackrel{\simeq}{\to} E_j$ sending $\pi_j(P_0)$ to the origin.
In the following we will identify $F_j$ and~$E_j$ via this isomorphism.
Let $\iota \colon C \to J$ denote the embedding sending $P$ to the
class of $(P) - (P_0)$; since $P_0$ is rational, $\iota$ is defined over~$\Q$.
We can then consider the composition~$\delta$ of maps
\[ C(\Q) \stackrel{\iota}{\to} J(\Q) \stackrel{\varphi}{\to} \bigoplus_{j=0}^4 E_j(\Q)
         \to \bigoplus_{j=0}^4 \Sel^{(2)}(E_j/\Q)
         \to \bigoplus_{j=0}^4
            \Bigl(\frac{\Q^\times}{\Q^{\times 2}}
                   \oplus \frac{\Q^\times}{\Q^{\times 2}}\Bigr) =: H .
\]
Here $\Sel^{(2)}(E_j/\Q)$ denotes the $2$-Selmer group of~$E_j$ over~$\Q$.
The last map is the direct sum of the usual maps from $2$-descent on an
elliptic curve with full rational $2$-torsion. For any place~$v$ of~$\Q$,
there is a similar map, and we obtain a commutative diagram
\[ \xymatrix{C(\Q) \ar[r]^{\delta} \ar[d] & H \ar[d]^{\rho_v} \\
             C(\Q_v) \ar[r]^{\delta_v} & H_v
            }
\]
where $H_v$ is defined like~$H$, but replacing $\Q$ with~$\Q_v$, and the vertical
maps are the natural ones.

We assume that we have determined generators of each group~$E_j(\Q)$, or
at least generators of a subgroup of finite odd index, so that we know
the image~$H_0$ of $\bigoplus_j E_j(\Q)/2 E_j(\Q)$ in~$H$. Alternatively,
it may be sufficient to know the $2$-Selmer groups of the~$E_j$, which
gives us an upper bound~$H_0$ for the image of $\bigoplus_j E_j(\Q)/2 E_j(\Q)$ in~$H$.
The map~$\delta$
and its local equivalents~$\delta_v$ are given by evaluating certain
rational functions on~$C$ that are defined over~$\Q$ at a point~$P$
and then taking the square class of the result. So $\delta_v$ is locally
constant in the $v$-adic topology, which allows us to determine its
image with a finite computation for each given place~$v$.
We do this for a number of places and define~$H'_0$
to be the subset of~$H_0$ consisting of elements mapping into the
image of~$\delta_v$ for all places~$v$ that were considered.
Then $\delta(C(\Q)) \subset H'_0$, and if we are lucky, we even may
have equality, since we possibly know enough points in~$C(\Q)$ to verify that
$\delta$ surjects onto~$H'_0$.
In practice, it appears that using a few primes~$p$ of good reduction
(where the computation can be essentially done over~$\F_p$)
suffices to cut $H'_0$ down to the image of the known rational
points on~$C$. (What we do here is to compute
(an upper bound for) the Selmer set of~$C$
with respect to~$\hat{\varphi}$; compare~\cite{Stoll2011}
or~\cite{BS2009}. Note that $\Sel^{(\hat{\varphi})}(J/\Q)$ injects
into the direct sum of the $2$-Selmer groups of the~$E_j$ via the
map induced by~$\varphi$, since the elliptic curves have full rational
$2$-torsion and so $\bigoplus_j E_j(\Q)[2]$ surjects onto~$J(\Q)[\varphi] = G$.)

The action of~$A$ on~$C$ induces an action on the product of the~$E_j$
that on each~$E_j$ separately is generated by the translations by
$2$-torsion points and a map of the form $Q \mapsto P - Q$, where
$P = \pi_j(\sigma_i(P_0))$ for some fixed $i \neq j$. This translates
into the action of a subgroup~$H_1$ of~$H_0$ (the subgroup generated by
the images of the points in the $A$-orbit of~$P_0$) on~$H_0$ by translation;
the subset~$H'_0$ is a union of cosets of this subgroup. In the computation,
it will be sufficient to consider one representative of each coset, since
all results can be transported by the $A$-action to everything in the
same orbit.

Each nonzero element $T \in G$ induces an isogeny $J'_T \to J$ of degree~$2$;
we obtain~$J'_T$ by dividing the product of the~$E_j$ by the subgroup
of~$\ker \hat{\varphi}$ orthogonal to~$T$ under the Weil pairing
between $\ker \varphi$ and~$\ker \hat{\varphi}$.
Pulling back the isogeny under~$\iota$, we get an \'etale double cover $D_T \to C$.
For each $\xi \in H'_0$ we obtain a twist $D_{T,\xi} \to C$ of this
double cover with the property that all points in~$C(\Q)$ whose image
in~$H'_0$ is~$\xi$ lift to rational points on~$D_{T,\xi}$.
Then $D_{T,\xi}$ maps into the Prym variety of the double cover
$D_{T,\xi} \to C$, which is an abelian variety of dimension~$4$.

One can check that all but one of the nonzero elements~$T \in G$ can be represented as a
difference of two divisors of the form~$D_{ij}$. If
\[ T = [D_{ij} - D_{kl}] = [D_{ij} + D_{kl}] - L \,, \]
where $L$ denotes the class of a hyperplane section, then we obtain a model
of~$D_{T,\xi}$ in the following way. Fix points $P_{ij} \in Q_{ij}$ and
$P_{kl} \in Q_{kl}$, and let $\ell_{ij}(u_0, \ldots, u_4)$ and $\ell_{kl}(u_0, \ldots. u_4)$
be linear forms describing the tangent lines to the conics at the points
(note that $\ell_{ij}$ will not involve $u_i$ and~$u_j$; similarly for~$\ell_{kl}$).
Then the double covering of~$C$ given by adding the equation
\[ \gamma w^2 = \ell_{ij}(u_0, \ldots, u_4) \ell_{kl}(u_0, \ldots, u_4) \]
will be a (singular) model of~$D_{T,\xi}$, where $\gamma \neq 0$ has
to be chosen so that points mapping to~$\xi$ will lift. (Concretely,
if $P = (\upsilon_0 : \ldots : \upsilon_4) \in C(\Q)$ maps to~$\xi$, then we can
take $\gamma = \ell_{ij}(\upsilon_0, \ldots, \upsilon_4) \ell_{kl}(\upsilon_0, \ldots, \upsilon_4)$
if this value is nonzero.) If there is a rational point on~$C$
and therefore on all the conics, then we can take $P_{ij}$ and~$P_{kl}$
to be rational, and we obtain equations over~$\Q$.

For such~$T$, we can identify the Prym variety up to isogeny as the Weil restriction
of an elliptic curve defined over an \'etale algebra of degree~$4$
over~$\Q$ (which generically is a biquadratic number field).
This can be seen by choosing the points $P_{ij}$ and~$P_{kl}$ in such a way
that the linear forms~$\ell_{ij}$ and~$\ell_{kl}$ together involve only three
of the five variables. There are two cases.
\begin{enumerate}[1.]
  \item $T = [D_{ij} - D_{kl}]$ with $i,j,k,l$ distinct. Let $m$ be
        such that $\{i,j,k,l,m\} = \{0,\ldots,4\}$. We can write
        equations for the conics $Q_{ij}$, $Q_{kl}$ and~$Q_{ik}$ as
        \[   a u_k^2 = u_m^2 - b u_l^2\,, \quad
            a' u_i^2 = u_m^2 - b' u_j^2\,, \quad
           a'' u_l^2 = u_m^2 - b'' u_j^2\,.
        \]
        The intersection of the hyperplane $u_m = \sqrt{b} u_l$ with~$C$
        is twice a divisor of the form~$D_{ij}$ and the intersection of the
        hyperplane $u_m = \sqrt{b'} u_j$ is twice~$D_{kl}$. (This corresponds
        to taking $P_{ij}$ to be a point with $u_k = 0$ and $P_{kl}$ with $u_i = 0$.)
        Let $P \in C(\Q)$ be a point whose image in~$H'_0$ is~$\xi$, say
        $P = (\upsilon_0 : \ldots : \upsilon_4)$. Then $D_{T,\xi}$ maps to
        the curve $F_{T,\xi}$ given by
        \[ a'' u_l^2 = u_m^2 - b'' u_j^2\,, \quad
           \gamma w^2 = (u_m - \sqrt{b} u_l) (u_m - \sqrt{b'} u_j)
        \]
        in~$\PP^3$, where
        $\gamma = (\upsilon_m - \sqrt{b} \upsilon_l) (\upsilon_m - \sqrt{b'} \upsilon_j)$.
        This is a curve of genus~$1$ defined over $K = \Q(\sqrt{b}, \sqrt{b'})$.
        Let $E_{T,\xi}$ be its Jacobian elliptic curve.
        (Since $F_{T,\xi}$ has a point over~$K$ coming from~$P$, it is actually an elliptic
        curve itself.) Then Picard functoriality induces a homomorphism
        $R_{K/\Q} E_{T,\xi} \to \Jac(D_{T,\xi})$ that is an isogeny onto the Prym variety.
  \item $T = [D_{ij} - D_{ik}]$ with $i,j,k$ distinct.
        Let $l,m$ be such that $\{i,j,k,l,m\} = \{0,\ldots,4\}$. We can write
        equations for the conics $Q_{jk}$, $Q_{ik}$ and~$Q_{ij}$ as
        \[   a u_i^2 = u_l^2 - b u_m^2\,, \quad
            a' u_j^2 = u_l^2 - b' u_m^2\,, \quad
           a'' u_k^2 = u_l^2 - b'' u_m^2\,.
        \]
        The intersection of the hyperplane $u_l = \sqrt{b'} u_m$ with~$C$
        is twice a divisor of the form~$D_{ik}$ and the intersection of the
        hyperplane $u_l = \sqrt{b''} u_m$ is twice~$D_{ij}$. Let $P \in C(\Q)$
        be a point whose image in~$H'_0$ is~$\xi$, say
        $P = (\upsilon_0 : \ldots : \upsilon_4)$. Then $D_{T,\xi}$ maps to
        the curve $F_{T,\xi}$ given by
        \[ a u_i^2 = u_l^2 - b u_m^2\,, \quad
           \gamma w^2 = (u_l - \sqrt{b'} u_m) (u_l - \sqrt{b''} u_m)
        \]
        in~$\PP^3$, where
        $\gamma = (\upsilon_l - \sqrt{b'} \upsilon_m) (\upsilon_l - \sqrt{b''} \upsilon_m)$.
        This is a curve of genus~$1$ defined over $K = \Q(\sqrt{b'}, \sqrt{b''})$.
        Let $E_{T,\xi}$ be its Jacobian elliptic curve.
        (Since $F_{T,\xi}$ has a point over~$K$ coming from~$P$, it is actually an elliptic
        curve itself.) Then Picard functoriality induces a homomorphism
        $R_{K/\Q} E_{T,\xi} \to \Jac(D_{T,\xi})$ that is an isogeny onto the Prym variety.
\end{enumerate}

Each of the two cases covers $15$ possibilities for~$T$. Note that for each~$T$
there can be several ways of writing it as a difference of two divisors~$D_{ij}$,
and for each such representation there can be several ways of writing down
a curve~$F_{T,\xi}$. They will all lead to isogenous Weil restrictions, however.

More precisely, the points~$T$ occurring in Case~1 have a single representation
of the form $T = [D_{ij} + D_{kl}] - L$ in the sense that $\{\{i,j\}, \{k,l\}\}$
is uniquely determined. Since the construction above depends on choosing a representative
of each of $\{i,j\}$ and~$\{k,l\}$, we get four different curves $F_{T,\xi}$.
The points~$T$ occurring in Case~2 have two different representations as
$T = [D_{ij} + D_{ik}] - L = [D_{il} + D_{im}] - L$ (this comes from one of the
relations between the $[D_{ij}]$ mentioned earlier), but each representation gives
rise to only one curve~$F_{T,\xi}$.

From a computational point of view, this has the advantage that we can pick
the representation that involves the friendliest field~$K$ (with smallest
discriminant, say) or else gives us more possibilities for computing
rank bounds via Selmer groups. It turns out that we actually do get different
rank bounds in general, so it makes sense to look at all of the $90$~possibilities
obtained by making all possible choices, together with the curves $2$-isogenous to them
(generically, each of the elliptic curves obtained has one point of order~$2$
defined over its base field).

\begin{remark}
  Note that in the first case, we usually obtain elliptic curves over four
  different biquadratic fields whose Weil restrictions are all isogenous over~$\Q$
  to the Prym variety of the double cover.
  Taking the $2$-isogenous curves in two out of the four cases, we obtain
  four elliptic curves that become all isomorphic over the compositum~$K'$
  of their various fields of definition ($K'$ is generated by four square roots over~$\Q$).
  Let $E$ be this curve over~$K'$. Then $E$ is isomorphic over~$K'$ to all its conjugates,
  and the Weil restrictions are all isomorphic over~$K'$ to~$E^4$.
  The Weil restrictions are all isogenous over~$\Q$, but (in general) not isomorphic
  (as can be seen by considering their $2$-torsion subgroups,
  whose elements are defined over different octic fields).
\end{remark}

We now observe that we have, in each case, an elliptic curve~$F_{T,\xi}$
together with morphisms $D_{T,\xi} \to F_{T,\xi} \to \PP^1$ over~$K$,
where the morphism to~$\PP^1$ is given by the quotient of any two of the
$u$~coordinates involved in~$F_{T,\xi}$, such that the image of any point
$P \in D_{T,\xi}(\Q)$ in~$F_{T,\xi}(K)$ maps into~$\PP^1(\Q)$.

This is the setting
for the Elliptic Curve Chabauty method~\cite{Bruin2003}, which allows
us to find the set of such points in~$F_{T,\xi}(K)$ when the rank of this
Mordell-Weil group is strictly less than $[K : \Q] = 4$.

If the degree of~$K$ is less than~$4$, then
instead of one elliptic curve over a quartic field, we have to work
with two elliptic curves over a quadratic field (this is what is done
in the applications in~\cites{GJX2014,GJ2015}) or with four elliptic curves
over~$\Q$.

We therefore obtain the following procedure that may determine~$C(\Q)$.
For simplicity, we assume that we are in the generic case where all
fields~$K$ are of degree~$4$.

\begin{enumerate}[1.]
  \item For each $j \in \{0,\ldots,4\}$, compute the $2$-Selmer group of~$E_j$
        (or even, if possible, the group $E_j(\Q)/2 E_j(\Q)$); \\
        determine the group~$H_0$ and its subgroup~$H_1$.
  \item For a suitable finite set of places~$v$ of~$\Q$, determine
        $\delta_v(C(\Q_v)) \subset H_v$; \\
        use this to compute $H'_0 \subset H_0$. \\
        Verify that the set of known points in~$C(\Q)$ surjects onto~$H'_0$.
  \item Let $X$ be a set of representatives of $H'_0/H_1$.
  \item For each $\xi \in X$, do the following.
        \begin{enumerate}[a.]
          \item For each $T \in G$ as in cases 1 or~2 above,
                determine an upper bound for the rank~$r$ of $F_{T,\xi}(K)$
                (where $K$ is the quartic field as in Case 1 or~2). \\
                If $r \ge 4$ for all such~$T$, report failure and stop.
          \item For some $T$ such that $r \le 3$, perform
                the Elliptic Curve Chabauty computation to find all
                points $P' \in F_{T,\xi}(K)$ whose image in~$\PP^1$
                is rational.
          \item For each $P'$ obtained in this way, check if it lifts
                to a rational point~$P$ on~$C$. Collect all points
                found in this way in a set~$S$.
        \end{enumerate}
  \item Return $S$; it is a set of orbit representatives of the action
        of~$A$ on~$C(\Q)$.
\end{enumerate}

In Step~4a, we can for example compute the $2$-Selmer group of~$F_{T,\xi}$.
Note that the Jacobian of each~$F_{T,\xi}$ has a $K$-rational point of
order~$2$, so we can also compute the $2$-Selmer group of the $2$-isogenous
curve, which in some cases gives a better bound.

To perform Step~4b, we need to find generators of a finite-index subgroup
of~$F_{T,\xi}(K)$. We may get some points of infinite order from the
known points in~$C(\Q)$; if this does not give enough points, then it may
be difficult to find the missing generators. It might be possible to use
the ``Selmer group Chabauty'' method of~\cite{Stoll-preprint} in this case, however.


\section{Application to rational diophantine quintuples} \label{S:tuples}

Recall that a \emph{rational diophantine $m$-tuple} is an $m$-tuple $(a_1,\ldots,a_m)$
of distinct nonzero rational numbers such that $a_i a_j + 1$ is a square for all $1 \le i < j \le m$.

Assume that a rational diophantine quadruple $(a_1, a_2, a_3, a_4)$
is given. Then a rational number~$z$ extending it to a rational
diophantine quintuple must satisfy the equations
\[ a_1 z + 1 = u_1^2\,, \quad a_2 z + 1 = u_2^2\,, \quad
   a_3 z + 1 = u_3^2\,, \quad a_4 z + 1 = u_4^2
\]
for suitable rational numbers $u_1, \ldots, u_4$. To homogenize, add
another variable $u_0$ and set $u_0 = 1$ and $a_0 = 0$. Then we have
five equations $a_j z + 1 = u_j^2$.
Eliminating $1$ and~$z$ then results in a diagonal
genus~$5$ curve~$C$, and so we can hope to apply the procedure outlined
in the previous section to find all possible extensions.
Note that this curve~$C$ is the locus of points $(u_0 : \ldots : u_4) \in \PP^4$
such that
\begin{equation} \label{E:rk2}
  \rk \begin{pmatrix} 1 & 1 & 1 & 1 & 1 \\
                      a_0 & a_1 & a_2 & a_3 & a_4 \\
                      u_0^2 & u_1^2 & u_2^2 & u_3^2 & u_4^2
       \end{pmatrix}
       \le 2 \,.
\end{equation}
The ten quadrics of rank~$3$ containing~$C$ are then given by the
$3 \times 3$-minors of the matrix in~\eqref{E:rk2}:
\[ (a_k-a_j) u_i^2 + (a_i-a_k) u_j^2 + (a_j-a_i) u_k^2 = 0
   \qquad \text{for $0 \le i < j < k \le 4$.}
\]

We note that $C$ always has the point $P_0 = (1 : 1 : 1 : 1 : 1)$
(and the points in its orbit under~$A$), which corresponds to
the `illegal' extension by $z = 0$. There are also two further orbits
of points corresponding to
\begin{equation} \label{E:regext}
  z = \frac{(a_1 + a_2 + a_3 + a_4) (a_1 a_2 a_3 a_4 + 1)
               + 2 (a_1 a_2 a_3 + a_1 a_2 a_4 + a_1 a_3 a_4 + a_2 a_3 a_4) \pm 2 s}%
            {(a_1 a_2 a_3 a_4 - 1)^2} \,,
\end{equation}
where $s = \sqrt{(a_1 a_2 + 1)(a_1 a_3 + 1)(a_1 a_4 + 1)(a_2 a_3 + 1)(a_2 a_4 + 1)(a_3 a_4 + 1)}$;
compare~\cite{Dujella1997}*{Theorem~1}.
Generically these three orbits are distinct, but it is possible
that one of the latter two gives $z = 0$. This is always the case
for the one-parameter family
\begin{equation} \label{E:1par}
  \bigl(t-1, t+1, 4 t, 4 t (2 t - 1) (2 t + 1)\bigr)
\end{equation}
of diophantine quadruples. In any case, it is an interesting question
whether for any given rational diophantine quadruple these are the only
possible extensions to a rational diophantine quintuple.

As in Section~\ref{S:genus5}, we use $\pi_i(P_0)$ as the origin on~$F_i$.
Let $(i,j,k,l,m)$ be a permutation of~$(0,1,2,3,4)$. Then an equation
for the elliptic curve~$E_i$ is given by
\[ E_i \colon y^2 = (x + a_j a_k + a_l a_m)(x + a_j a_l + a_k a_m)(x + a_j a_m + a_k a_l) \]
and the $x$-coordinate of the image of $P \in C$ is given by
the quotient of linear forms
\[ x = \frac{N_i(u_0, \ldots, u_4)}{D_i(u_0, \ldots, u_4)} \]
with
\begin{align*}
  N_i(u_0, \ldots, u_4) &= \left|\begin{matrix} 1 & 1 & 1 & 1 \\
                                  a_j & a_k & a_l & a_m \\
                                  a_j^2 & a_k^2 & a_l^2 & a_m^2 \\
                                  a_j b_j u_j & a_k b_k u_k & a_l b_l u_l & a_m b_m u_m
                                 \end{matrix}\right| \qquad\text{and}\\
  D_i(u_0, \ldots, u_4) &= \left|\begin{matrix} 1 & 1 & 1 & 1 \\
                                  a_j & a_k & a_l & a_m \\
                                  a_j^2 & a_k^2 & a_l^2 & a_m^2 \\
                                  u_j & u_k & u_l & u_m
                                 \end{matrix}\right| ,
\end{align*}
where $b_j = a_j - a_k - a_l - a_m$ and similarly for $b_k$, $b_l$, $b_m$.
Then
\[ x + a_j a_k + a_l a_m
     = \frac{V}{D_i(u_0, \ldots, u_4)}
           \Bigl(\frac{u_j-u_k}{a_j-a_k} + \frac{u_l-u_m}{a_l-a_m}\Bigr)
\]
is one of the expressions whose square class gives a component of the
map to~$H$. Here $V$ is the Vandermonde determinant of~$(a_j,a_k,a_l,a_m)$.
An expression for the $y$-coordinate is
\[ y = \frac{V \cdot \bigl((a_j-a_k)(u_j u_k - u_l u_m) + (a_k-a_l)(u_k u_l - u_j u_m)
                            + (a_l-a_j)(u_l u_j - u_k u_m)\bigr)}%
            {D_i(u_0, \ldots, u_4) \cdot \bigl((a_j-a_k) u_l + (a_k-a_l) u_j + (a_l-a_j) u_k\bigr)} \,.
\]

We have implemented the algorithm of Section~\ref{S:genus5} in Magma~\cite{Magma};
see {\sf diophtuples.magma} at~\cite{programs}.
For Step~4b we check if the subgroup generated by points coming from the known
rational points on~$C$ reaches the upper bound for the rank. We then have
generators of a finite-index subgroup and can directly perform the Elliptic
Curve Chabauty computation. When there is a gap between the rank of the known
subgroup and the upper bound, then we would have to find additional generators.
(The most common case is that the rank bound is~$3$, so in principle,
Elliptic Curve Chabauty is possible, but the known subgroup has rank~$2$.
Standard conjectures imply that the rank is then~$3$, so we are missing
one generator.) This can be quite hard for the curves showing up in the
computation, so we have treated this `gap' case as a failure for simplicity.

We have then used our implementation on several rational diophantine quadruples
taken from the one-parameter family~\eqref{E:1par}, with $t \in \Q$ positive
and of small height.
Excluding the cases $t = 1$, $\tfrac{1}{2}$, $\tfrac{1}{3}$ and~$\tfrac{1}{4}$,
which give degenerate quadruples, we were able to show for
\[ t \in \bigl\{ 2, 3, \tfrac{2}{3}, \tfrac{3}{2}, 4, \tfrac{3}{4}, \tfrac{4}{3},
                 5, \tfrac{1}{5}, \tfrac{2}{5}, \tfrac{3}{5}, \tfrac{5}{4}, \tfrac{4}{5} \bigr\} \]
that the extension $z \neq 0$ given by~\eqref{E:regext} is the only possibility.
The case $t = 2$ is Fermat's quadruple; this case is dealt with in detail
(and using a variant of the approach from Section~\ref{S:genus5})
in Section~\ref{S:Fermat} below. When $t = \tfrac{3}{5}$, there is another
`illegal' extension by $z = \tfrac{12}{5}$ (which is already in the original
quadruple); this is because $\bigl(\tfrac{12}{5}\bigr)^2 + 1 = \bigl(\tfrac{13}{5}\bigr)^2$.

In the following table, we give some more detailed information.
This can be checked using file {\sf diophtuples-verify.magma} at~\cite{programs}.

\[ \renewcommand{\arraystretch}{1.2}
   \begin{array}{|c|c|c|c|c|} \hline
     t & T & \text{2-isog.~curve} & \text{field} & \text{subject to} \\ \hline \hline
     2 & [D_{12} + D_{13}] - L & \text{no} & \Q(\sqrt{-6},\sqrt{-14},\sqrt{26}) & \\
     3 & [D_{01} + D_{02}] - L & \text{no} & \Q(\sqrt{10},\sqrt{13},\sqrt{418}) & \\
     \tfrac{2}{3} & [D_{10} + D_{13}] - L & \text{no} & \Q(\sqrt{3},\sqrt{-5},\sqrt{-14}) & \\
     \tfrac{3}{2} & [D_{12} + D_{13}] - L & \text{no} & \Q(\sqrt{-7},\sqrt{65},\sqrt{165}) & \\
     4 & [D_{14} + D_{23}] - L & \text{yes} & \Q(\sqrt{-55},\sqrt{-335},\sqrt{-465}) & \text{GRH} \\
     \tfrac{3}{4} & [D_{10} + D_{34}] - L & \text{no} & \Q(\sqrt{-1},\sqrt{2},\sqrt{15}) & \\
     \tfrac{4}{3} & [D_{02} + D_{13}] - L & \text{yes} & \Q(\sqrt{33},\sqrt{105},\sqrt{6097}) & \text{GRH} \\
     5 & [D_{12} + D_{30}] - L & \text{no} & \Q(\sqrt{-2},\sqrt{-35},\sqrt{247}) & \\
     \tfrac{1}{5} & [D_{04} + D_{31}] - L & \text{yes} & \Q(\sqrt{-1},\sqrt{3},\sqrt{14}) & \\
     \tfrac{2}{5} & [D_{13} + D_{20}] - L & \text{no} & \Q(\sqrt{-7},\sqrt{33},\sqrt{-247}) & \\
     \tfrac{3}{5} & [D_{13} + D_{24}] - L & \text{no} & \Q(\sqrt{7},\sqrt{-26},\sqrt{334}) & \\
     \tfrac{5}{4} & [D_{13} + D_{24}] - L & \text{yes} & \Q(\sqrt{-6},\sqrt{-19},\sqrt{170}) & \\
     \tfrac{4}{5} & [D_{01} + D_{03}] - L & \text{no} & \Q(\sqrt{-2},\sqrt{-35},\sqrt{-3245}) & \text{GRH} \\
     \hline
   \end{array}
\]

The column ``$T$'' gives the representation of the point $T \in G$ that was used to
produce the curve $F_{T,\xi}$ according to the procedure in the previous section.
In the column ``2-isog.~curve'' we note whether the rank bound was obtained from~$F_{T,\xi}$
itself or from the curve $2$-isogenous to it. The column ``field'' lists the octic
field for which we need class group and unit information during the computation of the
$2$-Selmer group of~$F_{T,\xi}$ or its isogenous curve. Finally, we indicate in the last
column whether the result is conditional on the Generalized Riemann Hypothesis.
The entry ``GRH'' indicates that this assumption was used to speed up the class group
computation for the octic field, which would have been infeasible in reasonable time otherwise.

The rank of the subgroup of~$F_{T,\xi}(K)$ generated by images of known points on~$C$
is~$2$ in all cases except for $t = \tfrac{3}{5}$, where it is~$3$. The upper bound
deduced from the $2$-Selmer group coincides with this lower bound in all cases.
Except again for $t = \tfrac{3}{5}$, there is only one class~$\xi$ to consider; in the
exceptional case, there are two (recall that we get the extra `illegal' extension
by~$\tfrac{12}{5}$, so there are additional rational points on~$C$).


\section{Possible extensions of Fermat's quadruple} \label{S:Fermat}

Fermat discovered the diophantine quadruple $(1,3,8,120)$ and Euler
found that it can be extended to a rational diophantine quintuple
by adding the number~$777480/8288641$. It seems to be an open question
whether this is the only possibility. We now give a fairly detailed proof that this
is indeed the case. (Note that the statement is contained in the results obtained
in the previous section; it is the case $t = 2$. The proof given here is slightly
different, though.) We do this using a variant of the approach
described above, which replaces Step~2 by the computation of the
fake $2$-Selmer set of one of the genus~$2$ curves arising as
quotients of the genus~$5$ curve.

\begin{theorem} \label{T:Fermat}
  The only way of extending Fermat's diophantine quadruple $(1,3,8,120)$
  to a rational diophantine quintuple $(1,3,8,120,z)$ is to take
  \[ z = \frac{777480}{8288641} \,. \]
\end{theorem}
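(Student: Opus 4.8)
The plan is to specialize the general machinery of Section~\ref{S:genus5} to the quadruple $(1,3,8,120)$, which corresponds to $t=2$ in the family~\eqref{E:1par}, and then to run the Elliptic Curve Chabauty procedure outlined there. First I would write down the genus~5 curve $C \subset \PP^4$ via the rank condition~\eqref{E:rk2} with $(a_0,a_1,a_2,a_3,a_4)=(0,1,3,8,120)$, take the rational point $P_0 = (1:1:1:1:1)$ as base-point, and use it to identify each $F_j$ with its Jacobian elliptic curve $E_j$. I would then carry out Step~1: compute the groups $E_j(\Q)/2E_j(\Q)$ (or their $2$-Selmer groups) for $j=0,\dots,4$, obtaining the ambient group $H_0$ and its translation subgroup $H_1$. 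The known rational points on $C$ here are the $A$-orbit of $P_0$ (giving $z=0$) together with the orbit corresponding to Euler's extension $z=777480/8288641$; the second ``regular'' branch in~\eqref{E:regext} collapses to $z=0$ for this family, so generically we expect exactly these point orbits.

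Next I would perform Step~2: evaluate the local maps $\delta_v$ at a handful of good primes $p$ (computing essentially over $\F_p$) to cut $H_0$ down to $H'_0$, and verify that the images of the known points already surject onto $H'_0$. For $t=2$ the table in Section~\ref{S:tuples} indicates there is a single relevant class $\xi$, so $X = H'_0/H_1$ has one element and Step~3 is immediate. For Step~4a I would select the point $T = [D_{12}+D_{13}]-L$ (a Case~2 point, since $\{1,2\}$ and $\{1,3\}$ share the index~$1$) and construct the curve $F_{T,\xi}$ over the biquadratic field $K = \Q(\sqrt{-6},\sqrt{-14},\sqrt{26})$ as in Case~2, where $K$ is the quartic field over which the three listed square roots generate an octic splitting field. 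The rank bound for $F_{T,\xi}(K)$ comes from its $2$-Selmer group over $K$; the table records that the $2$-isogenous curve is \emph{not} needed here, and that the lower bound of~$2$ from the images of known points on $C$ matches the Selmer upper bound, so $\rk F_{T,\xi}(K) = 2 < 4 = [K:\Q]$, putting us in the range where Elliptic Curve Chabauty applies.

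Granting generators of a finite-index subgroup of $F_{T,\xi}(K)$ (which Step~4b provides from the known rational points once the lower and upper rank bounds agree), I would then run the Elliptic Curve Chabauty method of~\cite{Bruin2003} to determine all $P' \in F_{T,\xi}(K)$ whose image under the map $F_{T,\xi} \to \PP^1$ lies in $\PP^1(\Q)$. Finally, in Step~4c I would check which of these finitely many points lift to genuine rational points on $C$, collect them into the orbit-representative set $S$, and read off the resulting values of $z$. The only legal extension surviving this process is $z = 777480/8288641$, which yields the theorem.

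The hard part will be the rank verification and the Elliptic Curve Chabauty step over the quartic field $K$. Computing the $2$-Selmer group of $F_{T,\xi}$ over a biquadratic field requires class-group and unit data for the degree-$8$ splitting field $\Q(\sqrt{-6},\sqrt{-14},\sqrt{26})$, which is the genuine computational bottleneck; the table notes that for $t=2$ this can be done unconditionally (no ``GRH'' entry). The subtler conceptual obstacle is the ``gap'' issue flagged in Section~\ref{S:tuples}: Elliptic Curve Chabauty needs actual generators, not just a rank bound, so one must confirm that the images of the known points on $C$ already generate a finite-index subgroup of the full rank. For $t=2$ the known subgroup has rank~$2$, matching the Selmer bound, so no gap arises and the computation goes through; this is precisely why Fermat's quadruple is tractable. (The variant proof promised in Section~\ref{S:Fermat}, replacing Step~2 by a fake $2$-Selmer set computation on a genus~$2$ quotient, is an alternative route that sidesteps part of the local analysis, but the Elliptic Curve Chabauty finish remains the decisive ingredient either way.)
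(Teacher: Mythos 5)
Your proposal is correct in substance: it reconstructs the computation that Section~\ref{S:tuples} reports for $t=2$, and the paper itself notes that Theorem~\ref{T:Fermat} is contained in those results. However, the paper's own proof of the theorem deliberately takes a different route, the ``variant'' announced at the start of Section~\ref{S:Fermat}. Instead of running Steps 1--3 of the general procedure (Selmer groups of the five $E_j$, local images $\delta_v$ at good primes, the class set $H'_0$), the paper collapses the three conic equations at once: writing $x=u_4$, the relations $x^2+119=120u_1^2$, $x^2+39=40u_2^2$, $x^2+14=15u_3^2$ multiply together to give a rational point on the genus~$2$ curve $H\colon y^2=5(x^2+119)(x^2+39)(x^2+14)$, and the problem becomes the determination of $H(\Q)$. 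A single two-cover descent on~$H$ in the sense of~\cite{BS2009} produces a two-element fake $2$-Selmer set, both elements hit by the trivial points $x=\pm1$; the involution $x\mapsto -x$ swaps them, so only one covering needs to be analyzed, leading to an elliptic curve over $K=\Q(\sqrt{-119},\sqrt{-39})$ of rank~$2$ on which Elliptic Curve Chabauty (after saturation at $2,3,5,7$) finishes the job. What the paper's route buys is that the entire ``Selmer set of $C$'' stage is replaced by one standard, off-the-shelf hyperelliptic descent plus a symmetry argument, and the relevant quartic field $\Q(\sqrt{-119},\sqrt{-39})$ falls out of that descent; what your route buys is uniformity --- it is the same machine that handles all thirteen values of~$t$ in the table, with the freedom to choose among the many representations of~$T$ to optimize the field and the rank bound. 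Both proofs share the decisive final ingredient: Elliptic Curve Chabauty on a rank-$2$ curve over a biquadratic quartic field. One small correction to your write-up: $\Q(\sqrt{-6},\sqrt{-14},\sqrt{26})$ is an octic (triquadratic) field, not the biquadratic field over which $F_{T,\xi}$ is defined; as the table's caption explains, it is the field whose class group and units are needed for the $2$-Selmer computation of $F_{T,\xi}$, which itself lives over a quartic subfield generated by two of the square roots.
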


\begin{proof}
  We want to determine all nonzero rational~$z$ satisfying the following
  system of equations (with suitable $u_1, u_2, u_3, u_4 \in \Q$):
  \[ z + 1 = u_1^2, \quad 3z + 1 = u_2^2, \quad 8z + 1 = u_3^2, \quad 120z + 1 = u_4^2. \]
  Writing $x = u_4$, we see that
  \[ x^2 + 119 = 120 u_1^2, \quad x^2 + 39 = 40 u_2^2, \quad x^2 + 14 = 15 u_3^2, \]
  so
  \[ 5 (x^2 + 119) (x^2 + 39) (x^2 + 14) = (600 u_1 u_2 u_3)^2, \]
  and we obtain a rational point on the hyperelliptic curve of genus~$2$
  \[ H \colon y^2 = 5 (x^2 + 119) (x^2 + 39) (x^2 + 14). \]
  A quick search finds the points
  \[ (\pm 1, \pm 600) \qquad\text{and}\qquad
    \Bigl(\pm\frac{10079}{2879}, \pm\frac{22426285104600}{2879^3}\Bigr).
  \]
  The first quadruple of points corresponds to the degenerate solution $z = 0$,
  the second one gives Euler's solution. We must show that these are the
  only rational points on~$H$.

  We perform a `two-cover descent' on~$H$ as in~\cite{BS2009}. This results
  in a two-element `fake 2-Selmer set', whose elements are accounted
  for by the `trivial' points in~$H(\Q)$ with $x = \pm 1$. In particular,
  we see that the automorphism of~$H$ given by changing the sign of~$x$
  interchanges the two Selmer set elements, so it suffices to find all
  rational points mapping to one of them, say the element corresponding
  to $x = 1$. Any such rational point will give rise to a point
  over $K = \Q(\sqrt{-119}, \sqrt{-39})$ on the curve
  \[ E \colon Y^2 = 15 (1 - \sqrt{-119}) (1 - \sqrt{-39}) \cdot
                    (X^2 + 14) (X - \sqrt{-119}) (X - \sqrt{-39})
  \]
  with $X$-coordinate in~$\Q$ (since $X = x$). This curve~$E$ is an
  elliptic curve over~$K$; it has one $K$-rational point of order~$2$.
  We compute its $2$-Selmer group and find that it is isomorphic to~$(\Z/2\Z)^3$.
  This implies that the rank of~$E(K)$ is at most~$2$. We find two independent
  points in~$E(K)$ (coming from the known points on~$H$), so the rank
  is indeed~$2$. We saturate the group generated by the $2$-torsion point
  and these two points at $2$, $3$, $5$, $7$, which is enough for
  the Elliptic Curve Chabauty computation as implemented
  in Magma~\cite{Magma}. This computation finally shows that the only
  points in~$E(K)$ with $X \in \Q$ are the points with $X = 1$
  or $X = 10079/2879$. This concludes the proof.
\end{proof}

\begin{remark}
  Since the curve~$H$ in the proof is bi-elliptic, with both elliptic curve
  quotients of rank~$1$, it would also possible to use ``Quadratic Chabauty''
  to determine its set of rational points. See recent work by Balakrishnan
  and~Dogra~\cites{BD1,BD2}.
\end{remark}


\section{Mordell-Weil groups of elliptic curves over $\Q(t)$} \label{S:EQt}

We now consider the problem of determining generators for the group $E(\Q(t))$
of $\Q(t)$-rational points on an elliptic curve~$E$ defined over~$\Q(t)$,
in the case when all points of order~$2$ on~$E$ are defined over~$\Q(t)$.
The usual $2$-descent approach gives an embedding
\[ \delta \colon \frac{E(\Q(t))}{2 E(\Q(t))} \To H_0
          \subset \frac{\Q(t)^\times}{\Q(t)^{\times 2}} \times \frac{\Q(t)^\times}{\Q(t)^{\times 2}}
\]
with a suitable finite subgroup~$H_0$. For all but finitely many $\tau \in \Q$,
we can specialize~$E$ to an elliptic curve~$E_\tau$ over~$\Q$, and we have
a natural homomorphism $\rho_\tau \colon E(\Q(t)) \to E_\tau(\Q)$. By Silverman's specialization
theorem~\cite{Silverman1983}, $\rho_\tau$ is injective once $\tau$
has sufficiently large height. If we can find such a $\tau$ with the property
that the subgroup of~$E(\Q(t))$ generated by some known points surjects onto~$E_\tau(\Q)$,
then this proves that our known points already generate~$E(\Q(t))$.
The lower bound for the height of~$\tau$ that one can extract from Silverman's approach
tends to be too large to be practical; also, $\rho_\tau$ is usually injective
also for most ``small''~$\tau$. So it is useful to have a more concrete computational
criterion for testing the injectivity of~$\rho_\tau$. Such a criterion was provided
by Gusi\'c and~Tadi\'c in a recent paper~\cite{GusicTadic2015} (building on the
earlier paper~\cite{GusicTadic2012}), in the case when
$E$ has at least one point of order~$2$ defined over~$\Q(t)$. We improve on their
approach somewhat by making use of several specializations to cut down the group~$H_0$
that contains the image of~$E(\Q(t))/2E(\Q(t))$, which gives the criterion a better
chance of success.

We then use our method to determine generators of the Mordell-Weil group
of the elliptic curve
\[ E \colon y^2 = \bigl(x + 4t(t-1)\bigr) \bigl(x + 4t(t+1)\bigr) \bigl(x + (t-1)(t+1)\bigr) \]
over~$\Q(t)$. It turns out that the group has rank~$1$ and maps isomorphically
under~$\rho_2$ to~$E_2(\Q)$. We then use this result to show that the ``regular''
extension of a family of rational diophantine quadruples to a rational diophantine
quintuple is the only generic (i.e., given by a rational function in the parameter)
such extension; see Sections \ref{S:ExE} and~\ref{S:Generic} below.

\medskip

So let $E$ be an elliptic curve over the rational function field~$\Q(t)$.
Since $\Q(t)$ is a finitely generated field, the Mordell-Weil group~$E(\Q(t))$
is a finitely generated abelian group~\cite{LangNeron1959}. In the following, we will
assume that all points of order~$2$ on~$E$ are defined over~$\Q(t)$
and that $E$ is given by a Weierstrass equation with coefficients in~$\Z[t]$
of the form
\[ E \colon y^2 = \bigl(x - e_1(t)\bigr) \bigl(x - e_2(t)\bigr) \bigl(x - e_3(t)\bigr) \,. \]
As over any field (of characteristic~$\neq 2$), we have the exact sequence
\[ 0 \To E(\Q(t))[2] \To E(\Q(t)) \stackrel{\cdot 2}{\To} E(\Q(t)) \stackrel{\delta}{\To} H \,, \]
where $H$ is the subgroup of $\bigl(\Q(t)^\times/\Q(t)^{\times 2}\bigr)^3$
consisting of triples such that the product of the three entries is trivial.
One can show in the usual way that the image of~$\delta$ is
contained in the subgroup generated by the prime divisors in the UFD~$\Z[t]$
of
\[ \Delta(t) = (e_1(t) - e_2(t))(e_1(t) - e_3(t))(e_2(t) - e_3(t)) \in \Z[t] \]
in each of the three components; see~\cite{GusicTadic2015}.

Let $\tau \in \Q$ be such that $\Delta(\tau) \neq 0$. Then we can specialize~$E$
to an elliptic curve~$E_\tau$ over~$\Q$, and we obtain a specialization
homomorphism $\rho_\tau \colon E(\Q(t)) \to E_\tau(\Q)$.
Gusi\'c and Tadi\'c in~\cite{GusicTadic2015} give a criterion for when
$\rho_\tau$ is injective. In the following, we will give a streamlined proof
of their result (which is based on the same ideas), which we will then use to
devise a method that can show that a known set of points in~$E(\Q(t))$
generates the latter group.

We begin with an easy lemma.

\begin{lemma} \label{L:diagram}
  Assume that we have the following commutative diagram of abelian groups
  with exact rows:
  \[ \xymatrix{A \ar@{->>}[d]_{\alpha} \ar[r] & B \ar[d]_{\beta} \ar[r]^{\varphi}
                 & C \ar[d]^{\gamma} \ar[r] & D \ar@{^(->}[d]^{\delta} \\
               A' \ar[r] & B' \ar[r] & C' \ar[r] & D'
              }
  \]
  Assume further that $\alpha$ is surjective and $\delta$ is injective. Then $\varphi$
  induces a surjective homomorphism $\ker \beta \to \ker \gamma$.
\end{lemma}

\begin{proof}
  This is an easy diagram chase.
\end{proof}

\begin{corollary} \label{C:inj}
  Let $E$ and~$H$ be as above. Assume that $H' \subset H$ is finitely
  generated and contains~$\delta\bigl(E(\Q(t))\bigr)$. If the specialization
  homomorphism $h_\tau \colon H' \to H_\tau = (\Q^\times/\Q^{\times 2})^3$
  associated to~$\tau$ is injective,
  then the homomorphism $\rho_\tau \colon E(\Q(t)) \to E_\tau(\Q)$ is also injective.
\end{corollary}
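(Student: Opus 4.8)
The plan is to deduce Corollary~\ref{C:inj} from Lemma~\ref{L:diagram} by fitting the relevant specialization maps into the commutative diagram with exact rows appearing in the lemma. The key observation is that the $2$-descent exact sequence
\[ 0 \To E(\Q(t))[2] \To E(\Q(t)) \stackrel{\cdot 2}{\To} E(\Q(t)) \stackrel{\delta}{\To} H \]
exists in parallel form both over $\Q(t)$ and over~$\Q$ (after specialization at~$\tau$), and the specialization homomorphisms $\rho_\tau$ on the $E$-terms and $h_\tau$ on the $H$-term should commute with the arrows in these sequences. So first I would write down the two descent sequences as the top and bottom rows of a diagram, with the vertical maps being the appropriate specializations.

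The technical point to get right is the choice of the four columns $A,B,C,D$ so that the hypotheses of the lemma (that $\alpha$ is surjective and $\delta$ is injective) are met, and so that the conclusion ($\ker\beta \to \ker\gamma$ surjective) yields exactly what we want, namely that $\ker\rho_\tau$ is trivial. I would take the relevant four-term exact row to begin at $E(\Q(t))$: using $A = E(\Q(t))$ with map $\cdot 2$ into $B = E(\Q(t))$, then $\varphi = \delta$ into $C = H$ (or rather the finitely generated $H' \supset \delta(E(\Q(t)))$), and a fourth term $D$ capturing the cokernel $H'/\delta(E(\Q(t)))$; the bottom row is the analogous sequence for~$E_\tau$ with $H' $ replaced by $H_\tau$. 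The surjectivity of~$\alpha$ (multiplication by~$2$ being compatible with specialization and surjective onto its image) and the injectivity of the rightmost vertical map are what need checking. Crucially, the hypothesis that $h_\tau$ restricted to~$H'$ is injective is precisely the injectivity of the relevant $\gamma$-type column, which by the lemma forces $\ker\beta \to \ker\gamma$ to be surjective onto a trivial kernel, whence $\ker\beta$, i.e.\ $\ker\rho_\tau$, is trivial.

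The main obstacle I anticipate is bookkeeping: arranging the two four-term exact sequences so that the specialization maps genuinely commute with all the horizontal arrows, and identifying which column plays the role of the injective~$\delta$ in the lemma versus the surjective~$\alpha$. In particular one must verify that $h_\tau \circ \delta = \delta_\tau \circ \rho_\tau$, i.e.\ that descent commutes with specialization; this compatibility is the heart of the argument and is essentially the assertion that evaluating a square class of a rational function at~$\tau$ agrees with the Kummer coordinates of the specialized point. Given that compatibility, the rest is the promised diagram chase. A subtle point is that $\rho_\tau$ and $\cdot 2$ need not be surjective as maps of groups, so I would want to set up the rows using images and cokernels carefully—for instance taking the top row to be $E(\Q(t)) \stackrel{\cdot 2}{\to} E(\Q(t)) \stackrel{\delta}{\to} H' \to H'/\delta(E(\Q(t)))$—so that the leftmost map is onto $2E(\Q(t))$ and the hypotheses of Lemma~\ref{L:diagram} are literally satisfied, yielding $\ker\rho_\tau = 0$ as desired.
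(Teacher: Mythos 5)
Your overall plan---fit the two $2$-descent sequences, connected by the specialization maps, into Lemma~\ref{L:diagram}, after checking the compatibility $h_\tau \circ \delta = \delta_\tau \circ \rho_\tau$---is the right one, but the specific diagram you propose does not prove the corollary. With your choice of columns ($A = B = E(\Q(t))$, $C = H'$, $D$ the cokernel of~$\delta$), the map $\varphi$ of the lemma is the descent map~$\delta$, so the lemma's conclusion reads: $\delta$ induces a surjection $\ker\rho_\tau \to \ker\bigl(h_\tau|_{H'}\bigr) = 0$. A surjection onto the trivial group is automatic and carries no information about its source; your final inference ``surjective onto a trivial kernel, whence $\ker\rho_\tau$ is trivial'' is a non sequitur. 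Moreover, the lemma's hypotheses fail in your arrangement: the leftmost vertical map would be $\rho_\tau \colon E(\Q(t)) \to E_\tau(\Q)$, which is not surjective in general (the specialized curve can easily have larger rank), and the rightmost map, induced on cokernels $H'/\delta(E(\Q(t))) \to H_\tau/\delta_\tau(E_\tau(\Q))$, has no reason to be injective.

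The paper's proof shifts the four-term window one step to the left, and this is exactly where the full $2$-torsion hypothesis enters (you never use it): take $A = E(\Q(t))[2]$, $B = C = E(\Q(t))$ with $\varphi$ the multiplication-by-$2$ map, and $D = H'$. Then $\beta = \gamma = \rho_\tau$, the injective column of the lemma is $\delta = h_\tau$ (precisely the hypothesis of the corollary), and the surjective column $\alpha$ is the specialization $E(\Q(t))[2] \to E_\tau(\Q)[2]$, which is an isomorphism because $E$ has all its $2$-torsion rational over~$\Q(t)$ and $\Delta(\tau) \neq 0$. The conclusion is now genuinely useful: multiplication by~$2$ maps $K_\tau = \ker\rho_\tau$ onto itself, i.e.\ $K_\tau = 2K_\tau$. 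Even then one is not done, since $K_\tau = 2K_\tau$ does not force $K_\tau = 0$ for an arbitrary abelian group (consider~$\Q$); the paper's final step is to note that $\rho_\tau$ is injective on torsion, so $K_\tau$ is a finitely generated torsion-free, hence free, abelian group, and a free group equal to twice itself must vanish. Both this freeness argument and the $2$-torsion column are missing from your proposal, so as written it has a genuine gap rather than being a mere bookkeeping variant of the paper's proof.
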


\begin{proof}
  We first remark that $\rho_\tau$ is injective on torsion. Let $K_\tau = \ker \rho_\tau$;
  then $K_\tau$ is a finitely generated torsion-free abelian group, so $K_\tau$ is
  a free abelian group.
  We consider the commutative diagram with exact rows
  \[ \xymatrix{E(\Q(t))[2] \ar[r] \ar[d]_{\simeq}
                 & E(\Q(t)) \ar[r]^{\cdot 2} \ar[d]_{\rho_\tau}
                 & E(\Q(t)) \ar[r]^-{\delta} \ar[d]_{\rho_\tau}
                 & H' \ar[d]^{h_\tau} \\
               E_\tau(\Q)[2] \ar[r]
                 & E_\tau(\Q) \ar[r]^{\cdot 2}
                 & E_\tau(\Q) \ar[r]^-{\delta_\tau}
                 & H_\tau\,.
              }
  \]
  Since $E$ has full $2$-torsion over~$\Q(t)$, the leftmost map is an isomorphism.
  By assumption the rightmost map is injective. So Lemma~\ref{L:diagram} tells us that
  $2K_\tau = K_\tau$. Since $K_\tau$ is free, this implies $K_\tau = 0$ as desired.
\end{proof}

This easily implies Theorem~1.1 of~\cite{GusicTadic2015}: their condition
is equivalent to the injectivity of $H' \to H_\tau$, where $H'$ is a slightly
refined version of the general upper bound for~$\delta\bigl(E(\Q(t))\bigr)$
mentioned above.

\begin{remark} \label{R:2isog}
  If $E(\Q(t))[2] \simeq \Z/2\Z$ and $E'$ is the $2$-isogenous curve with
  dual isogenies $\phi \colon E \to E'$ and $\hat{\phi} \colon E' \to E$,
  then we can use
  \[ \xymatrix{E(\Q(t))[2] \times E'(\Q(t))[2] \ar[r] \ar[d]
                 & E(\Q(t)) \times E'(\Q(t)) \ar[r]^{\Phi} \ar[d]_{\rho_\tau}
                 & E(\Q(t)) \times E'(\Q(t)) \ar[r]^-{\delta} \ar[d]_{\rho_\tau}
                 & H' \ar[d]_{h_\tau} \\
               E_\tau(\Q)[2] \times E'_\tau(\Q)[2] \ar[r]
                 & E_\tau(\Q) \times E'_\tau(\Q) \ar[r]^{\Phi}
                 & E_\tau(\Q) \times E'_\tau(\Q) \ar[r]^-{\delta_\tau}
                 & H_\tau
              }
  \]
  with $\Phi(P, Q) = (\hat{\phi}(Q), \phi(P))$ and $H' \subset (\Q(t)^\times/\Q(t)^{\times 2})^2$
  coming from the descent maps associated to $\phi$ and~$\hat{\phi}$.
  Note that $\Phi \circ \Phi$ is multiplication by~$2$ on~$E \times E'$.
  If the leftmost map is surjective and $h_\tau$ is injective, then both specialization
  maps $E(\Q(t)) \to E_\tau(\Q)$ and $E'(\Q(t)) \to E'_\tau(\Q)$ are injective.
  This recovers Theorem~1.3 of~\cite{GusicTadic2015}; note that their condition
  implies that there are no additional rational $2$-torsion points on the specialized curves.
\end{remark}

\begin{remark}
  The exact same result clearly holds when we replace $E$ by the Jacobian variety~$J$
  of a hyperelliptic curve with the property that all $2$-torsion points on~$J$
  are $\Q(t)$-rational. Remark~\ref{R:2isog} generalizes in a similar way to Jacobians
  of curves of genus~$2$ admitting a Richelot isogeny whose kernel consists of
  points defined over~$\Q(t)$.
\end{remark}

\begin{remark}
  If $E(\Q(t))[2] = 0$, then one can still use this approach in principle (one needs
  to use specializations with $E_\tau(\Q)[2] = 0$); however, the difficulty lies in
  obtaining a suitable bounding group~$H'$. It is a subgroup of the group of square classes
  in the function field of the curve given by the $2$-torsion sections in the elliptic
  surface associated to~$E$, and so it will very likely be necessary to obtain information
  on the $2$-torsion in the Picard group of this curve (which is trigonal, but can be
  arbitrarily complicated in any other way).
\end{remark}

If one wants to use the criterion of Gusi\'c and~Tadi\'c directly, one needs
to find~$\tau$ such that $-1$ and the distinct prime factors of~$\Delta(t)$
(in the UFD~$\Z[t]$ and up to sign) have
independent images in~$\Q^\times/\Q^{\times 2}$. If there are many prime factors,
this usually means that $\tau$ cannot be taken to have small height.
This in turn may lead to difficulties when trying to determine the rank of~$E_\tau(\Q)$.

We suggest the following modified approach. Let $P_1, \ldots, P_r \in E(\Q(t))$ be
known independent points. We want to show that they generate the free part of~$E(\Q(t))$.

\begin{enumerate}[1.]
  \item Find the prime divisors of~$\Delta$ in~$\Z[t]$ and let $H_0$ be the subgroup of~$H$
        consisting of triples all of whose entries are represented by a divisor of~$\Delta$.
        (Then the image of~$\delta$ is contained in~$H_0$.)
  \item For a finite set~$T$ of $\tau \in \Q$ such that $\Delta(\tau) \neq 0$,
        compute the $2$-Selmer group of~$E_\tau$ as a subgroup $S_\tau \subset H_\tau$
        (if feasible, compute tighter bounds for $\delta_\tau(E_\tau(\Q))$, for example
        by determining~$E_\tau(\Q)$).
  \item Set $H' \colonequals \{a \in H_0 : \forall \tau \in T \colon h_\tau(a) \in S_\tau\}$.
  \item Now consider values of~$\tau$ such that $\Delta(\tau) \neq 0$ and
        $H' \cap \ker h_\tau = 0$.
        If $\rho_\tau(P_1), \ldots, \rho_\tau(P_r)$ generate the free part of~$E_\tau(\Q)$,
        then $P_1, \ldots, P_r$ generate the free part of~$E(\Q(t))$.
\end{enumerate}

To see that Step~4 works, first note that by Corollary~\ref{C:inj}, we know
that $\rho_\tau$ is injective. If the known subgroup of the free part of~$E(\Q(t))$
surjects under~$\rho_\tau$ onto the free part of~$E_\tau(\Q)$, then it follows
that the known subgroup must already be all of the free part.

We have implemented this procedure in Magna~\cite{Magma}; see the file {\sf ellQt.magma}
at~\cite{programs}.


\section{Examples} \label{S:ExE}

In the following, we will use additive notation for the group~$H$ and its
subgroups. Note that they are killed by~$2$, so they can be considered as
vector spaces over~$\F_2$.

We consider the curve
\[ E \colon y^2 = \bigl(x + 4t(t-1)\bigr) \bigl(x + 4t(t+1)\bigr) \bigl(x + (t-1)(t+1)\bigr) \,. \]
It has the point
\[ P = \bigl(0, 4t(t-1)(t+1)\bigr) \]
of infinite order. The discriminant factors as
\[ \Delta_E = -2^{10} t^2 (t-1)^2 (t+1)^2 (3t-1)^2 (3t+1)^2 \,, \]
hence the image of~$E(\Q(t))$ under~$\delta$ is contained in~$H_0$, where
\[ H_0 = H \cap \langle -1, 2, t, t-1, t+1, 3t-1, 3t+1 \rangle^3 \,, \]
so that $H_0$ has dimension~$14$.
The image of the known subgroup of~$E(\Q(t))$ is~$H_1$, generated by
\begin{align*}
   &\bigl(-2t(t-1)(3t-1), 2t, -(t-1)(3t-1)\bigr)\,, \\
   &\bigl(-2t, 2t(t+1)(3t+1), -(t+1)(3t+1)\bigr) \qquad\text{and} \\
   &\bigl(t(t-1), t(t+1), (t-1)(t+1)\bigr)\,.
\end{align*}
We can check that for the specializations~$E_t$ with $t = 2$, $t = 3$ and~$t = 5$,
the groups $E_t(\Q)$ are generated by the specializations of the known generators
of~$E(\Q(t))$. (The Magma function \texttt{MordellWeilGroup} does this for us.)
This implies that
\[ H_1 \subset \delta(E(\Q(t))) \subset
     \bigl(H_1 + \ker(h_2)\bigr) \cap \bigl(H_1 + \ker(h_3)\bigr) \cap \bigl(H_1 + \ker(h_5)\bigr) \,.
\]
We can easily check that the intersection on the right equals~$H_1$,
so that $\delta(E(\Q(t))) = H_1$ (this already shows that $E(\Q(t))$
has rank~$1$ and that the known points generate a subgroup of finite odd index).

For $\tau = 2$ (and also $\tau = 3$ and $\tau = 5$),
we have that $H_1$ meets the kernel of~$h_\tau$ trivially.
Since the specializations of the known generators of~$E(\Q(t))$ are generators of~$E_\tau(\Q)$,
we have shown that the known points actually generate the full group~$E(\Q(t))$:

\begin{proposition} \label{P:EQt}
  The group $E(\Q(t))$ is isomorphic to $\Z/2\Z \times \Z/2\Z \times \Z$, with generators
  \[ T_1 = \bigl(-4t(t-1), 0\bigr)\,, \quad T_2 = \bigl(-4t(t+1), 0\bigr) \quad\text{and}\quad
     P = (0, 4t(t-1)(t+1)\bigr) \,.
  \]
\end{proposition}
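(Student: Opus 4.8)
The plan is to verify Proposition~\ref{P:EQt} by applying the modified specialization procedure (Steps~1--4) that was just set up, using the concrete data already assembled for this particular curve~$E$. First I would confirm the torsion part: the three quadratic factors $x+4t(t-1)$, $x+4t(t+1)$, $x+(t-1)(t+1)$ exhibit the full $2$-torsion, giving the two independent points $T_1=(-4t(t-1),0)$ and $T_2=(-4t(t+1),0)$ generating $E(\Q(t))[2]\simeq(\Z/2\Z)^2$; since $E(\Q(t))$ is finitely generated and $\Q(t)$ has no nontrivial roots of unity beyond $\pm1$ that could enlarge the torsion here, the torsion subgroup is exactly this $(\Z/2\Z)^2$ (one should double-check there is no extra $2$-torsion or odd torsion, which follows from examining the specializations).

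Next I would establish that $E(\Q(t))$ has rank~$1$ with $P=(0,4t(t-1)(t+1))$ generating the free part. The computation preceding the statement already shows $\delta(E(\Q(t)))=H_1$, where $H_1$ is the rank-$3$ (over $\F_2$) image of the known subgroup generated by $T_1,T_2,P$. Because $\delta$ has kernel exactly $2E(\Q(t))$, the equality $\delta(E(\Q(t)))=H_1$ tells us that the known subgroup surjects onto $E(\Q(t))/2E(\Q(t))$, which pins the rank at~$1$ and shows the known points generate a subgroup of finite odd index. I would then invoke Step~4 of the procedure with, say, $\tau=2$: one verifies that $H_1\cap\ker h_2=0$, so by Corollary~\ref{C:inj} the specialization map $\rho_2\colon E(\Q(t))\to E_2(\Q)$ is injective; the Magma computation of $\texttt{MordellWeilGroup}$ shows $\rho_2(T_1),\rho_2(T_2),\rho_2(P)$ generate $E_2(\Q)$. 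Injectivity plus surjectivity of the known generators onto $E_2(\Q)$ forces the known subgroup to be all of $E(\Q(t))$.

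The one point deserving care — and what I expect to be the main obstacle in a fully rigorous writeup — is the transition from ``finite odd index'' to ``full group.'' The descent map $\delta$ only controls the index up to odd factors, so ruling out, for instance, a hidden index-$3$ subgroup requires the specialization argument rather than descent alone. This is precisely where Corollary~\ref{C:inj} does the essential work: the injectivity of $\rho_\tau$ (guaranteed by $H'\cap\ker h_\tau=0$, here with $H'=H_1$) combined with surjectivity of the specialized known points onto the free part of $E_\tau(\Q)$ eliminates any odd-index ambiguity at once, since an injection carrying the known subgroup onto the full specialized Mordell--Weil group cannot have proper image. I would assemble the conclusion by stating that $E(\Q(t))\simeq\Z/2\Z\times\Z/2\Z\times\Z$ with the listed generators, the torsion coming from the explicit $2$-torsion points and the free part from~$P$.
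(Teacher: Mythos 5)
Your proposal is correct and is essentially the paper's own proof: the computation you cite (specializations at $\tau=2,3,5$ forcing $\delta(E(\Q(t)))=H_1$, Corollary~\ref{C:inj} applied with $H'=H_1$ to get injectivity of $\rho_2$, and surjectivity of the specialized known generators onto $E_2(\Q)$) is exactly how the paper concludes. The only blemish is your aside about roots of unity in $\Q(t)$, which is not what controls torsion on an elliptic curve; it is also unnecessary, since the structure $(\Z/2\Z)^2\times\Z$ with the stated generators follows directly from the final equality $E(\Q(t))=\langle T_1,T_2,P\rangle$ (torsion and free part cannot mix, as $P$ has infinite order).
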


\begin{remark}
  The smooth elliptic surface~$\calE$ over~$\PP^1$ associated to~$E$ is a rational
  surface. It has six bad fibers (over $t = 0, 1, -1, \frac{1}{3}, -\frac{1}{3}$ and~$\infty$),
  which are all of type~$\operatorname{I}_2$. By the Shioda-Tate formula~\cite{Shioda1999}, this gives
  \[ \rank E(\bar{\Q}(t)) = 10 - 2 - 6 = 2 \,, \]
  so there must be another point defined over~$K(t)$ for some finite extension~$K$ of~$\Q$.
  After searching a bit, one finds the independent point
  \[ Q = \bigl(2t(t+1), 2\sqrt{3} t(t+1)(3t-1)\bigr) \,, \]
  which is defined over~$\Q(\sqrt{3})$ (and actually comes from the quadratic twist
  of~$E$ by~$3$). The height pairing matrix of $P$ and~$Q$ is the diagonal matrix
  with entries $\tfrac{1}{2}$, $\tfrac{1}{2}$. Since the bad fibers are of type~$\operatorname{I}_2$,
  it follows that the canonical height of any point is in~$\tfrac{1}{2}\Z$.
  Since there is no proper superlattice of the square lattice generated by $P$ and~$Q$
  whose points satisfy this condition,
  this implies that $P$ and~$Q$ generate the free part of~$E(\bar{\Q}(t))$.
  We thus obtain an alternative proof of Proposition~\ref{P:EQt}.
  We would also like to mention that Dujella~\cite{Dujella2000}*{Theorem~4} had
  already shown that $E(\Q(t))$ has rank~$1$.
\end{remark}

In the same way, one can show the following.

\begin{proposition} \strut \label{P:otherE}
  \begin{enumerate}[\upshape(1)]
    \item The group of $\Q(t)$-points on the elliptic curve
          \[ y^2 = \bigl(x + (t^2-1)\bigr)\bigl(x + 4t(t-1)(4t^2-1)\bigr)\bigl(x + 4t(t+1)(4t^2-1)\bigr) \]
          is isomorphic to $\Z/2\Z \times \Z/2\Z \times \Z^2$, generated by the points
          \begin{gather*}
             \bigl(-(t^2-1), 0\bigr), \quad \bigl(-4t(t-1)(4t^2-1), 0\bigr), \quad
             \bigl(0, 4t(t^2-1)(4t^2-1)\bigr) \\
             \text{and}\quad
             \bigl(-8t^2(2t^2-1), 4t(4t^2-t-1)(4t^2+t-1)\bigr)\,.
          \end{gather*}
    \item The group of $\Q(t)$-points on the elliptic curve
          \[ y^2 = \bigl(x + 4t(t-1)\bigr)\bigl(x + 4t(t-1)(4t^2-1)\bigr)\bigl(x + 16t^2(4t^2-1)\bigr) \]
          is isomorphic to $\Z/2\Z \times \Z/2\Z \times \Z^2$, generated by the points
          \begin{gather*}
             \bigl(-4t(t-1), 0\bigr), \quad \bigl(-4t(t-1)(4t^2-1), 0\bigr), \quad
             \bigl(0, 16t^2(t-1)(4t^2-1)\bigr) \\
             \text{and}\quad
             \bigl(4t(t-1)(2t-1)(4t+1), 16t^2(t-1)(2t-1)(3t+1)(4t-1)\bigr)\,.
          \end{gather*}
    \item The group of $\Q(t)$-points on the elliptic curve
          \[ y^2 = \bigl(x + 4t(t+1)\bigr)\bigl(x + 4t(t+1)(4t^2-1)\bigr)\bigl(x + 16t^2(4t^2-1)\bigr) \]
          is isomorphic to $\Z/2\Z \times \Z/2\Z \times \Z^2$, generated by the points
          \begin{gather*}
             \bigl(-4t(t+1), 0\bigr), \quad \bigl(-4t(t+1)(4t^2-1), 0\bigr), \quad
             \bigl(0, 16t^2(t+1)(4t^2-1)\bigr) \\
             \text{and}\quad
             \bigl(4t(t+1)(2t+1)(4t-1), 16t^2(t+1)(2t+1)(3t-1)(4t+1)\bigr)\,.
          \end{gather*}
    \item The group of $\Q(t)$-points on the elliptic curve
          \[ y^2 = x\bigl(x + 16t(2t^2-1)\bigr)\bigl(x + (3t+1)(4t+1)(4t^2-t-1)\bigr) \]
          is isomorphic to $\Z/2\Z \times \Z/2\Z \times \Z$, generated by the points
          \begin{gather*}
             \bigl(0, 0\bigr), \; \bigl(-16t(2t^2-1), 0\bigr)
             \quad\text{and}\quad
             \bigl(2(3t+1), 2(3t+1)(4t-1)(4t^2+t-1)\bigr)\,.
          \end{gather*}
  \end{enumerate}
\end{proposition}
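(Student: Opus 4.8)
The plan is to run, for each of the four curves separately and independently, exactly the procedure of Section~\ref{S:EQt} that was carried out for Proposition~\ref{P:EQt}; the phrase ``in the same way'' signals that no new idea is needed, only the analogous computations. For a given curve $E \colon y^2 = (x-e_1)(x-e_2)(x-e_3)$, with the $e_i \in \Z[t]$ read off from the displayed factorization, I would first substitute the listed points into the equation to confirm that they lie on~$E$: the two points with $y = 0$ are the $2$-torsion sections coming from two of the~$e_i$, and the remaining one or two points are the asserted free generators. Next I would factor the discriminant $\Delta_E$ into irreducibles in the UFD~$\Z[t]$ and let $H_0 \subseteq H$ be the subgroup whose three components are each spanned by $-1$ and the prime divisors of~$\Delta_E$ (Step~1 of the procedure); by the bound recalled in Section~\ref{S:EQt}, $\delta\bigl(E(\Q(t))\bigr) \subseteq H_0$.

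The core of the argument then follows the two-step pattern of the proof of Proposition~\ref{P:EQt}. First I would compute the images under~$\delta$ of the listed free generators and let $H_1$ be the subgroup they span, so that $H_1 \subseteq \delta\bigl(E(\Q(t))\bigr)$. Choosing a handful of specialization values $\tau \in \Q$ of small height with $\Delta_E(\tau) \neq 0$ (for instance $\tau \in \{2,3,5\}$), I would use Magma's \texttt{MordellWeilGroup} to determine each $E_\tau(\Q)$ and to verify that the specialized known generators generate it. As in the rank-$1$ case, the commutativity $h_\tau \circ \delta = \delta_\tau \circ \rho_\tau$ then forces $\delta\bigl(E(\Q(t))\bigr) \subseteq \bigcap_\tau \bigl(H_1 + \ker h_\tau\bigr)$, and a finite linear-algebra check over~$\F_2$ that this intersection equals~$H_1$ yields $\delta\bigl(E(\Q(t))\bigr) = H_1$. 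This already pins down the free rank (equal to the number of independent listed generators, i.e.\ $2$ in cases~(1)--(3) and $1$ in case~(4)) and shows that the known points have finite odd index.

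To upgrade ``finite odd index'' to ``generate'', I would exhibit one of the chosen $\tau$ for which $H_1 \cap \ker h_\tau = 0$; then Corollary~\ref{C:inj}, applied with $H' = H_1$, shows that $\rho_\tau \colon E(\Q(t)) \to E_\tau(\Q)$ is injective. Since the specialized generators were already checked to generate $E_\tau(\Q)$, and since they are themselves images of elements of~$E(\Q(t))$, the map $\rho_\tau$ is in fact an isomorphism onto $E_\tau(\Q)$; injectivity therefore forces the listed points to generate all of~$E(\Q(t))$ and identifies $E(\Q(t))$ with $E_\tau(\Q)$ as an abstract group. Because each curve has full $\Q(t)$-rational $2$-torsion and each relevant $E_\tau(\Q)$ turns out to have torsion exactly $(\Z/2\Z)^2$, this gives the stated group structures.

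The main obstacle is computational rather than conceptual, and it is concentrated in cases~(1)--(3), where the free rank is~$2$: there one must determine the \emph{full} Mordell--Weil group of a rank-$2$ specialization $E_\tau(\Q)$ and check that the two specialized free generators together with the $2$-torsion generate it. This succeeds only when the $2$-Selmer bound for~$E_\tau$ meets the rank of the known subgroup and the group can be saturated; if instead a ``gap'' appears between the Selmer bound and the known points (the difficulty flagged after the procedure in Section~\ref{S:EQt}), the chosen~$\tau$ is useless and another must be tried. Thus the real work is to find, for each curve, specialization values that are simultaneously small enough for $E_\tau(\Q)$ to be computed unconditionally, numerous enough that $\bigcap_\tau\bigl(H_1 + \ker h_\tau\bigr) = H_1$, and transverse enough that $H_1 \cap \ker h_\tau = 0$ for at least one of them.
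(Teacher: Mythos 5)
Your proposal is correct and follows essentially the same route as the paper: the paper's proof of Proposition~\ref{P:otherE} simply invokes the implementation of the Section~\ref{S:EQt} procedure (``Running our implementation on the four given curves and the given points results in a proof that the points generate the group in each case''), and what you describe --- bounding $\delta\bigl(E(\Q(t))\bigr)$ by $H_0$ via the factorization of~$\Delta_E$, pinning it down to~$H_1$ using specializations and \texttt{MordellWeilGroup}, then applying Corollary~\ref{C:inj} with a $\tau$ for which $H_1 \cap \ker h_\tau = 0$ --- is exactly the worked pattern of Proposition~\ref{P:EQt} that this implementation automates. The only caveat, which you correctly flag yourself, is that the argument's success in cases (1)--(3) rests on the computational fact that rank-$2$ specializations of small height can be fully determined; this is verified by the computation, not by theory, in both your write-up and the paper's.
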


\begin{proof}
  Running our implementation on the four given curves and the
  given points results in a proof that the points generate the group in each case.
  See the commented-out last part of~{\sf ellQt.magma} for the relevant commands.
\end{proof}

We note that the specializations at integers of the curve considered in part~(1)
have been studied by Najman~\cite{Najman2009}.

Further applications of the method can be found in~\cites{DGT2015,DPT2016}.


\section{Generic extensions of the family of diophantine quadruples} \label{S:Generic}

As already mentioned, there is a one-parameter family of rational diophantine quadruples,
given by
\[ (a_1, a_2, a_3, a_4) = \bigl(t-1,t+1,4t,4t(4t^2-1)\bigr) \quad
    \text{with $t \in \Q \setminus \bigl\{-1,-\tfrac{1}{2},-\tfrac{1}{3},-\tfrac{1}{4},
                                          0,\tfrac{1}{4},\tfrac{1}{3},\tfrac{1}{2},1\bigr\}$.}
\]
We have seen in Section~\ref{S:tuples} that it can be extended to a quintuple by setting
\[ a_5 = \frac{4 t (2t - 1) (2t + 1) (4t^2 - 2t - 1) (4t^2 + 2t - 1) (8t^2 - 1)}%
              {(64t^6 - 80t^4 + 16t^2 - 1)^2}
\]

We can now use Proposition~\ref{P:EQt} to show that this extension is in fact the
only \emph{generic} extension of the quadruple, in the sense that there is no
other rational function $a_5(t) \in \Q(t)^\times$ such that $(a_1(t), \ldots, a_5(t))$
is a diophantine quintuple in~$\Q(t)$ (i.e., such that $a_i(t) a_j(t) + 1$
is a square in~$\Q(t)$ for all $1 \le i < j \le 5$):

\begin{theorem} \label{T:ratptsQt}
  The only function $f(t) \in \Q(t)^\times$ such that
  \[ (t-1) f(t) + 1\,, \quad (t+1) f(t) + 1\,, \quad 4 t f(t) + 1 \quad\text{and}\quad
     4t(4t^2-1) f(t) + 1
  \]
  are all squares in~$\Q(t)$ is
  \[ f(t) = \frac{4 t (2t - 1) (2t + 1) (4t^2 - 2t - 1) (4t^2 + 2t - 1) (8t^2 - 1)}%
                 {(64t^6 - 80t^4 + 16t^2 - 1)^2} \,.
  \]
\end{theorem}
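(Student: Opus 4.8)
The plan is to translate the existence of a function $f(t)$ with the four stated square conditions into the existence of a $\Q(t)$-rational point on the genus~$5$ curve~$C$ over~$\Q(t)$ attached to the quadruple $(t-1,t+1,4t,4t(4t^2-1))$, and then to control $C(\Q(t))$ by bounding the relevant Mordell--Weil groups of elliptic curves over~$\Q(t)$. Concretely, setting $a_0=0$, $u_0=1$ and writing $a_i f(t) + 1 = u_i(t)^2$ for $i=1,\ldots,4$ presents such an $f$ as a point on the diagonal genus~$5$ curve over~$\Q(t)$, exactly as in Section~\ref{S:tuples}. The Jacobian~$J$ splits up to isogeny as $\prod_{j=0}^4 E_j$ (with the $E_j$ now elliptic curves over~$\Q(t)$ having full $\Q(t)$-rational $2$-torsion), and the regular extension of the family together with the illegal point $f\equiv 0$ gives the known $\Q(t)$-points. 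First I would observe that the curve~$E$ of Proposition~\ref{P:EQt} is (up to isomorphism over~$\Q(t)$) one of these factors~$E_j$; Proposition~\ref{P:EQt} then tells us its Mordell--Weil group completely, namely $E(\Q(t)) \simeq \Z/2\Z \times \Z/2\Z \times \Z$.

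The key idea is that the map to~$H$ from Section~\ref{S:genus5}, now taken over the function field~$\Q(t)$, is exactly the composition $C(\Q(t)) \stackrel{\iota}{\to} J(\Q(t)) \stackrel{\varphi}{\to} \bigoplus_j E_j(\Q(t)) \to \bigoplus_j \Sel$, and a $\Q(t)$-point on~$C$ maps to an element in the image of $\bigoplus_j E_j(\Q(t))/2E_j(\Q(t))$. The second step is therefore to pin down this image using the generators of each~$E_j(\Q(t))$ supplied by Propositions~\ref{P:EQt} and~\ref{P:otherE}: those propositions are arranged so that all five elliptic factors (and the auxiliary curves governing the étale double covers) have their $\Q(t)$-rational points explicitly generated. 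Knowing $E_j(\Q(t))$ exactly lets me determine the finite set~$H'_0$ of admissible square-class data, just as $H'_0$ is cut out over~$\Q$ in Section~\ref{S:genus5}, and to check that the known $\Q(t)$-points of~$C$ (the illegal one and the regular extension, together with their $A$-orbits) already surject onto~$H'_0$.

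From there I would run the covering/Chabauty machinery of Section~\ref{S:genus5} over the base field~$\Q(t)$: for each class $\xi \in H'_0$ and a suitable $T\in G$, the twist $F_{T,\xi}$ is an elliptic curve over an étale algebra $K$ of degree~$4$ over~$\Q(t)$, and a $\Q(t)$-point of~$C$ mapping to~$\xi$ yields a $K$-point of~$F_{T,\xi}$ whose image in~$\PP^1$ lies in $\PP^1(\Q(t))$. Because the Mordell--Weil ranks in play are fully determined by the generation statements above, the Elliptic Curve Chabauty condition (rank strictly below the degree) can be verified, and the finitely many resulting points can be matched against the known $\Q(t)$-points of~$C$. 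The conclusion is that the only $\Q(t)$-point of~$C$ with $f\not\equiv 0$ is the regular extension, which forces $f(t)$ to be the displayed rational function.

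The hard part will be step two: verifying that one genuinely knows \emph{all} of each $E_j(\Q(t))$ (not merely a finite-index subgroup) so that the bounding group used to build~$H'_0$ is tight. This is precisely why Proposition~\ref{P:EQt} and Proposition~\ref{P:otherE} are proved first, via the specialization criterion of Corollary~\ref{C:inj}: the delicate point is finding specializations $\tau$ at which $H_1$ meets $\ker h_\tau$ trivially \emph{and} the specialized points generate $E_\tau(\Q)$, so that the rank and the full group structure over~$\Q(t)$ are certified rather than merely bounded. Once the factors are known on the nose, the remaining descent and Chabauty computations are mechanical and can be carried out by the Magma implementation referenced in the paper.
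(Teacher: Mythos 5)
Your first step (turning $f$ into a $\Q(t)$-point on the diagonal genus-$5$ curve $C$ over $\Q(t)$) matches the paper, but the core of your plan --- running the Selmer-set computation and Elliptic Curve Chabauty machinery of Section~\ref{S:genus5} with $\Q(t)$ as the base field --- has a genuine gap. Elliptic Curve Chabauty is a $p$-adic method for elliptic curves over number fields: it uses the places of $\Q$ and a $p$-adic analytic argument under the condition rank $< [K:\Q]$. Neither the paper nor the literature it cites provides any analogue of this over the base $\Q(t)$, and the Magma implementation you appeal to does not apply there; the ``local conditions'' defining $H'_0$ would likewise have to be reinvented for places of $\Q(t)$. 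Moreover, the curves $F_{T,\xi}$ you would need live over quartic extensions $K$ of $\Q(t)$, and Propositions~\ref{P:EQt} and~\ref{P:otherE} say nothing about Mordell--Weil groups over such extensions (contrary to your claim that they also cover ``the auxiliary curves governing the \'etale double covers''); they determine only the five quotient curves $E_j$ over $\Q(t)$ itself. So the finishing step of your plan is simply not available with the tools at hand.

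The paper's actual proof avoids all of this with a specialization argument that your outline never considers. It uses the degree-$8$ morphism $\pi \colon C \to E$, $(x,u_1,u_2,u_3,u_4) \mapsto \bigl(4t(t^2-1)x,\, 4t(t^2-1)u_1u_2u_3\bigr)$, and only the single curve $E$ of Proposition~\ref{P:EQt}. The crucial content of that proposition for this purpose is not merely that $E(\Q(t))$ is known, but that the specialization map $\rho_2 \colon E(\Q(t)) \to E_2(\Q)$ at $t=2$ is an \emph{isomorphism}. Specializing a $\Q(t)$-point of $C$ at $t=2$ gives a rational point on $C_2$, the genus-$5$ curve attached to Fermat's quadruple $(1,3,8,120)$, and Theorem~\ref{T:Fermat} (proved earlier over $\Q$, where Elliptic Curve Chabauty genuinely applies) shows $C_2(\Q)$ consists of exactly $32$ points, whose image in $E_2(\Q)$ is $\{\pm P_2, \pm 5P_2\}$. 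Pulling back through the isomorphism $\rho_2$ forces every point of $C(\Q(t))$ to map to $\pm P$ or $\pm 5P$ in $E(\Q(t))$; since the $x$-coordinate of $\pi$ equals $4t(t^2-1)f(t)$, this determines $f$ to be $0$ (the illegal extension) or the stated rational function. If you want to repair your write-up, replace the over-$\Q(t)$ descent-and-Chabauty step by this specialization step: that is exactly what reduces the theorem to results already proved, rather than to new, undeveloped machinery over function fields.
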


\begin{proof}
  Any such $f(t)$ gives rise to a $\Q(t)$-rational point on the curve~$C$ defined by
  \[ (t-1) x + 1 = u_1^2\,, \quad (t+1) x + 1 = u_2^2\,, \quad 4t x + 1 = u_3^2\,, \quad
     4t(4t^2-1) x + 1 = u_4^2
  \]
  with $x = f(t)$. We have a morphism
  \[ \pi \colon C \To E\,, \qquad
     (x, u_1, u_2, u_3, u_4) \longmapsto \bigl(4t(t-1)(t+1) x, 4t(t-1)(t+1) u_1 u_2 u_3\bigr) \,,
  \]
  where $E$ is the elliptic curve considered in Proposition~\ref{P:EQt}.
  The morphism extends to a finite morphism of degree~$8$ from the projective closure of~$C$ to~$E$.
  Now consider the following commutative diagram.
  \[ \xymatrix{C(\Q(t)) \ar[r]^{\pi} \ar[d]^{\rho_{C,2}} & E(\Q(t)) \ar[d]^{\rho_2}_{\simeq} \\
               C_2(\Q) \ar[r]^{\pi_2} & E_2(\Q)
              }
  \]
  Here the subscript~$2$ denotes the specializations at $t = 2$.
  Since the genus of~$C_2$ is $5 \ge 2$, there are only finitely many
  points in~$C_2(\Q)$. In fact, we have shown in Theorem~\ref{T:Fermat}
  that there are precisely $32$~points in~$C_2(\Q)$, corresponding to the
  extension of $(1,3,8,120)$ by (the illegal value)~zero and by~$777480/8288641$
  (recall that each extension gives rise to an $A$-orbit of $16$~points on~$C_2$).
  This implies that the image
  of~$C_2(\Q)$ in~$E_2(\Q)$ consists of $\{\pm P_2, \pm 5P_2\}$ (where $P_2 = (0, 24) \in E_2(\Q)$
  is the specialization of~$P$). Here the points giving the extension by zero map
  to~$\pm P_2$, and the points giving the nontrivial extension map to~$\pm 5P_2$.

  Since the right-hand vertical map is an isomorphism by Proposition~\ref{P:EQt},
  this shows that any point in~$C(\Q(t))$ must map to $\pm P$ or~$\pm 5P$ in~$E(\Q(t))$.
  This leads to $x = 0$ or $x = f(t)$, finishing the proof.
\end{proof}

The curve $E$ and the other four elliptic curves showing up in Proposition~\ref{P:otherE}
are the five elliptic curves occurring as quotients of the genus~$5$ curve~$C$ over~$\Q(t)$
associated to the quadruple $(t-1, t+1, 4t, 4t(4t^2-1))$. So Propositions
\ref{P:EQt} and~\ref{P:otherE} together imply that the group $J(\Q(t))$
has rank~$8$, where $J$ denotes the Jacobian variety of~$C$.
We can actually say more.

\begin{proposition}
  Let $C$ denote the genus~$5$ curve over~$\Q(t)$ associated to the diophantine quadruple
  \[ \bigl(t-1, t+1, 4t, 4t(4t^2-1)\bigr) \]
  over~$\Q(t)$, and denote its Jacobian by~$J$. Then $C$ has exactly~$32$ $\Q(t)$-points, and
  the group $J(\Q(t))$ is isomorphic to~$(\Z/2\Z)^5 \times \Z^8$ and is generated by
  differences of these points.
\end{proposition}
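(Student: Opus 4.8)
The plan is to establish three things: that $C(\Q(t))$ consists of exactly $32$ points, that $J(\Q(t))$ has the stated structure, and that the differences of the $32$ points generate $J(\Q(t))$. First I would pin down the $32$ rational points. We already know $C$ has the point $P_0 = (1:1:1:1:1)$ and its $A$-orbit, giving $16$ points corresponding to the illegal extension $x = 0$; the regular extension $x = f(t)$ from Theorem~\ref{T:ratptsQt} supplies a second $A$-orbit of $16$. To show there are no others, I would use the degree-$8$ morphism $\pi\colon C \to E$ from the proof of Theorem~\ref{T:ratptsQt} together with the specialization diagram at $t=2$: since $\rho_2\colon E(\Q(t)) \to E_2(\Q)$ is an isomorphism by Proposition~\ref{P:EQt}, and Theorem~\ref{T:Fermat} shows $C_2(\Q)$ has exactly $32$ points mapping to $\{\pm P_2, \pm 5 P_2\}$, any $\Q(t)$-point of $C$ maps to one of $\pm P$, $\pm 5P$ in $E(\Q(t))$; the fibers of $\pi$ over these four points then account for exactly the $32$ known points.

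Next I would determine the abstract structure of $J(\Q(t))$. The torsion part $(\Z/2\Z)^5$ should come from the rational $2$-torsion subgroup $G = \ker\varphi$ of dimension $5$ identified in Section~\ref{S:genus5}; I would argue that $J(\Q(t))[2] = G$ (there is no further $\Q(t)$-rational $2$-torsion beyond what the diagonal structure forces) and that $J(\Q(t))$ has no odd torsion, for instance by reducing modulo a place and comparing with a specialization. For the free rank, I would invoke the isogeny $\varphi\colon J \to \prod_{j=0}^4 E_j$ over $\Q(t)$: by Propositions \ref{P:EQt} and~\ref{P:otherE} the five factors $E_j(\Q(t))$ have ranks $1,2,2,2,1$ summing to $8$, and since $\varphi$ is an isogeny it induces an isomorphism on Mordell--Weil groups up to finite kernel and cokernel, so $\rank J(\Q(t)) = 8$, as already noted in the text.

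The main work, and the step I expect to be the real obstacle, is showing that the $32$ points actually \emph{generate} $J(\Q(t))$ rather than merely a finite-index subgroup. Knowing the rank is $8$ and the torsion is $(\Z/2\Z)^5$ only shows the differences span a subgroup of finite index. To upgrade this to equality I would combine two ingredients. First, the explicit generators in Propositions \ref{P:EQt} and~\ref{P:otherE} are pullbacks $\pi_j^*$ of points on the $E_j$ that themselves arise from the $A$-orbits of $P_0$ and the regular extension; I would express each listed generator of each $E_j(\Q(t))$ as (a combination of) images under $\pi_{j,*}$ of differences of the $32$ points, so that the differences map \emph{onto} $\bigoplus_j E_j(\Q(t))$ under $\varphi$. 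Second, since $\hat\varphi \circ \varphi = [2]$ on $J$ and $\bigoplus_j E_j(\Q(t))[2]$ surjects onto $G = \ker\varphi$ (as established in Section~\ref{S:genus5}), a straightforward diagram chase then forces the subgroup $D$ generated by the differences to satisfy $J(\Q(t)) = D + 2\,J(\Q(t))$ while also containing all of $J(\Q(t))[2]$; combined with finite index this yields $D = J(\Q(t))$.

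The delicate point in that final argument is controlling the index at odd primes and verifying the surjectivity onto $\bigoplus_j E_j(\Q(t))$ rather than onto a sublattice --- in other words, checking that the specific differences of the $32$ explicit points realize the Mordell--Weil generators listed in the propositions, not just generators of a finite-index subgroup of each $E_j(\Q(t))$. This is essentially a finite linear-algebra computation over $\Q(t)$ (writing the generators in terms of pullbacks of orbit differences and tracking the resulting index), and I would expect to verify it by an explicit Magma calculation of the sort already used elsewhere in the paper, specializing at a few values of $t$ to confirm that no index is lost.
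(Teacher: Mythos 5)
Your treatment of the point count and of the rank is in line with the paper, but the generation step --- which you rightly single out as the real obstacle --- rests on a claim that is not merely unverified but false. You propose to show that the subgroup $D$ generated by differences of the $32$ points maps \emph{onto} $\bigoplus_j E_j(\Q(t))$ under $\varphi$. This is impossible: $D$ contains the rational kernel $\ker\varphi \cong (\Z/2\Z)^5$, so $\varphi(D) \cong D/\ker\varphi$ is torsion-free (indeed $\cong \Z^8$), whereas $\bigoplus_j E_j(\Q(t)) \cong (\Z/2\Z)^{10} \times \Z^8$, since each $E_j$ has full rational $2$-torsion over~$\Q(t)$. Hence $\varphi(D)$ is a \emph{proper} subgroup whose index is divisible by $2^{10}$ (the paper computes that this index is a power of~$2$). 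Your method of certifying the surjectivity also conflates componentwise surjectivity with surjectivity onto the product: expressing every listed generator of each individual $E_j(\Q(t))$ as $\pi_{j,*}$ of differences would only show that each projection is onto, not that $\varphi|_D$ hits $\prod_j E_j(\Q(t))$. Consequently the diagram chase built on this premise (giving $J(\Q(t)) = D + 2J(\Q(t))$, hence odd index, with the residual worry ``at odd primes'') never gets started. Moreover, the worry is aimed at the wrong prime: because $\ker\varphi \subseteq D$ and $\varphi(D)$ has $2$-power index in $\prod_j E_j(\Q(t))$, the index $[J(\Q(t)) : D]$ is automatically a power of~$2$; odd primes are a non-issue, and the whole difficulty of the proof is concentrated at~$2$.

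The idea missing from your proposal is the paper's $2$-saturation argument. Since $[J(\Q(t)) : D]$ is a power of~$2$, it suffices to prove that $D$ is $2$-saturated in $J(\Q(t))$, i.e.\ that no element outside $D$ has its double inside $D$. The paper reduces this to a finite check: there are $15$ nontrivial cosets of $\varphi(D) \cap 2\prod_j E_j(\Q(t))$ inside $\varphi(D)$; for a representative $P$ of each coset, one must rule out divisibility by~$2$ of every translate $P + T$ with $T \in \ker\varphi$. The non-divisibility is then certified not on $J$ itself but on the Jacobians of genus-$2$ curves arising as quotients of $C$ by sign-change subgroups of the automorphism group, using the `$x-T$ map' of \cite{Stoll2001}, which detects that a $\Q(t)$-point of such a Jacobian is not twice a rational point. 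Some device of this kind for handling the prime $2$ is indispensable: knowing the rank, the torsion, and finiteness of the index cannot by themselves yield $D = J(\Q(t))$, precisely because the ambient index is a $2$-power rather than odd.
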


\begin{proof}
  We use the smooth projective model of~$C$ obtained by setting \hbox{$u_0 = 1$} and eliminating~$x$.
  The statement on the $\Q(t)$-points of~$C$ is equivalent to Theorem~\ref{T:ratptsQt}:
  the illegal extension by zero gives rise to the $16$~points $(1 : \pm 1 : \pm 1 : \pm 1 : \pm 1)$,
  and the regular extension by~$f$ gives rise to a second orbit of $16$~points.
  By the theorem, these are all the $\Q(t)$-points on~$C$.

  Let $E_0, \ldots, E_4$ be the elliptic curves obtained by eliminating
  one of the variables $u_0, \ldots, u_4$. They are given as intersections
  of two quadrics in~$\PP^3$; choosing the image of the point $(1:1:1:1:1)$ as the origin,
  we obtain isomorphisms with the curve~$E$ considered above and the four further
  elliptic curves considered in Proposition~\ref{P:otherE} (in a different order).
  Let $G \subset J(\Q(t))$ denote the subgroup generated by the differences of
  the $\Q(t)$-points on~$C$. We find that the image in $E_0 \times \ldots \times E_4$
  of~$G$ is a finite-index subgroup of
  $G' = (E_0 \times \ldots \times E_4)(\Q(t))$ that is isomorphic to~$\Z^8$;
  the index is a power of~$2$. We know that the kernel of this map
  (which is the isogeny~$\phi$) is isomorphic to $(\Z/2\Z)^5$ and is contained
  in the group generated by differences of the $\Q(t)$-points on~$C$.
  (This is because the divisors~$D_{ij}$ considered in Section~\ref{S:genus5}
  can be realized as sums of four $\Q(t)$-points on~$C$, by pulling back
  the image of any such point on~$Q_{ij}$.) It therefore suffices to show that
  $G$ is $2$-saturated. Any element of~$G$ that is divisible by~$2$ maps
  to an element of~$G'$ that is divisible by~$2$. There are $15$ nontrivial
  cosets of the intersection of the image of~$G$ with~$2 G'$ inside the image of~$G$.
  We find a representative $P \in G$ of each of these cosets; it is then sufficient
  to show that no point of the form $P+T$ is divisible by~$2$, where $T$ is any
  element of the kernel.

  Since we can easily verify that an element of the group of $\Q(t)$-rational
  points on the Jacobian of a genus~$2$ curve is not divisible by~$2$ by checking
  that its image under the `$x-T$ map' is nontrivial (see for example~\cite{Stoll2001}),
  we consider some of the genus~$2$ curves~$D$ that occur as quotients of~$C$
  under a subgroup of the automorphism group consisting of sign changes on
  an even number of variables out of some three-element set. (Such a quotient
  was also used above in Section~\ref{S:Fermat}.) We note that the image of $P+T$
  as above must be the sum of the image of~$P$ and a $\Q(t)$-rational $2$-torsion
  point on the Jacobian of~$D$. For each of the $15$ choices of~$P$, we find
  some curve~$D$ such that none of these points is divisible by~$2$ in~$\Jac(D)(\Q(t))$.
  This finishes the proof.
\end{proof}


\begin{bibdiv}
\begin{biblist}

\bib{BD1}{article}{
   author={Balakrishnan, Jennifer S.},
   author={Dogra, Netan},
   title={Quadratic Chabauty and rational points, I: $p$-adic heights},
   journal={Duke Math. J.},
   volume={167},
   date={2018},
   number={11},
   pages={1981--2038},
   issn={0012-7094},
   review={\MR{3843370}},
   doi={10.1215/00127094-2018-0013},
}

\bib{BD2}{misc}{
  author={Balakrishnan, Jennifer},
  author={Dogra, Netan},
  title={Quadratic Chabauty and rational points II: Generalised height functions on Selmer varieties},
  date={2017-05-01},
  note={Preprint, \texttt{arXiv:1705.00401}},
  eprint={https://arxiv.org/abs/1705.00401},
}

\bib{Magma}{article}{
   author={Bosma, Wieb},
   author={Cannon, John},
   author={Playoust, Catherine},
   title={The Magma algebra system. I. The user language},
   note={Computational algebra and number theory (London, 1993)},
   journal={J. Symbolic Comput.},
   volume={24},
   date={1997},
   number={3-4},
   pages={235--265},
   issn={0747-7171},
   review={\MR{1484478}},
   doi={10.1006/jsco.1996.0125},
}

\bib{Bremner1997}{article}{
   author={Bremner, Andrew},
   title={Some special curves of genus $5$},
   journal={Acta Arith.},
   volume={79},
   date={1997},
   number={1},
   pages={41--51},
   issn={0065-1036},
   review={\MR{1438115}},
}

\bib{Bruin2003}{article}{
   author={Bruin, Nils},
   title={Chabauty methods using elliptic curves},
   journal={J. Reine Angew. Math.},
   volume={562},
   date={2003},
   pages={27--49},
   issn={0075-4102},
   review={\MR{2011330 (2004j:11051)}},
   doi={10.1515/crll.2003.076},
}

\bib{BS2009}{article}{
   author={Bruin, Nils},
   author={Stoll, Michael},
   title={Two-cover descent on hyperelliptic curves},
   journal={Math. Comp.},
   volume={78},
   date={2009},
   number={268},
   pages={2347--2370},
   issn={0025-5718},
   review={\MR{2521292 (2010e:11059)}},
   doi={10.1090/S0025-5718-09-02255-8},
}

\bib{Dujella1997}{article}{
   author={Dujella, Andrej},
   title={On Diophantine quintuples},
   journal={Acta Arith.},
   volume={81},
   date={1997},
   number={1},
   pages={69--79},
   issn={0065-1036},
   review={\MR{1454157}},
   doi={10.4064/aa-81-1-69-79},
}

\bib{Dujella2000}{article}{
   author={Dujella, Andrej},
   title={A parametric family of elliptic curves},
   journal={Acta Arith.},
   volume={94},
   date={2000},
   number={1},
   pages={87--101},
   issn={0065-1036},
   review={\MR{1762457}},
}

\bib{Dujella}{misc}{
   author={Dujella, Andrej},
   title={Diophantine $m$-tuples web page},
   note={\url{https://web.math.pmf.unizg.hr/~duje/dtuples.html}},
}

\bib{DGT2015}{article}{
   author={Dujella, Andrej},
   author={Gusi\'c, Ivica},
   author={Tadi\'c, Petra},
   title={The rank and generators of Kihara's elliptic curve with torsion
   $\mathbf{Z}/4\mathbf{Z}$ over $\mathbf{Q}(t)$},
   journal={Proc. Japan Acad. Ser. A Math. Sci.},
   volume={91},
   date={2015},
   number={8},
   pages={105--109},
   issn={0386-2194},
   review={\MR{3403940}},
   doi={10.3792/pjaa.91.105},
}

\bib{DKMS2017}{article}{
   author={Dujella, Andrej},
   author={Kazalicki, Matija},
   author={Miki\'c, Miljen},
   author={Szikszai, M\'arton},
   title={There are infinitely many rational Diophantine sextuples},
   journal={Int. Math. Res. Not. IMRN},
   date={2017},
   number={2},
   pages={490--508},
   issn={1073-7928},
   review={\MR{3658142}},
   doi={10.1093/imrn/rnv376},
}

\bib{DPT2016}{article}{
   author={Dujella, Andrej},
   author={Peral, Juan Carlos},
   author={Tadi\'c, Petra},
   title={Elliptic curves with torsion group $\mathbb Z/6\mathbb Z$},
   journal={Glas. Mat. Ser. III},
   volume={51(71)},
   date={2016},
   number={2},
   pages={321--333},
   issn={0017-095X},
   review={\MR{3580201}},
   doi={10.3336/gm.51.2.03},
}

\bib{Faltings1983}{article}{
  author={Faltings, G.},
  title={Endlichkeitss\"atze f\"ur abelsche Variet\"aten \"uber Zahlk\"orpern},
  language={German},
  journal={Invent. Math.},
  volume={73},
  date={1983},
  number={3},
  pages={349--366},
  issn={0020-9910},
  review={\MR {718935 (85g:11026a)}},
  note={Erratum in: Invent.\ Math.\ {\bf 75} (1984), 381},
}

\bib{GJ2015}{article}{
   author={Gonz\'alez-Jim\'enez, Enrique},
   title={Covering techniques and rational points on some genus 5 curves},
   conference={
      title={Fifth Spanish Meeting on Number Theory},
   },
   book={
      series={Contemp. Math.},
      volume={649},
      publisher={Amer. Math. Soc., Providence, RI},
   },
   date={2015},
   pages={89--105},
   doi={10.1090/conm/649/13021},
}

\bib{GJX2014}{article}{
   author={Gonz\'alez-Jim\'enez, Enrique},
   author={Xarles, Xavier},
   title={On a conjecture of Rudin on squares in arithmetic progressions},
   journal={LMS J. Comput. Math.},
   volume={17},
   date={2014},
   number={1},
   pages={58--76},
   issn={1461-1570},
   review={\MR{3230858}},
   doi={10.1112/S1461157013000259},
}

\bib{GusicTadic2012}{article}{
   author={Gusi\'c, Ivica},
   author={Tadi\'c, Petra},
   title={A remark on the injectivity of the specialization homomorphism},
   journal={Glas. Mat. Ser. III},
   volume={47(67)},
   date={2012},
   number={2},
   pages={265--275},
   issn={0017-095X},
   review={\MR{3006625}},
   doi={10.3336/gm.47.2.03},
}

\bib{GusicTadic2015}{article}{
   author={Gusi{\'c}, Ivica},
   author={Tadi{\'c}, Petra},
   title={Injectivity of the specialization homomorphism of elliptic curves},
   journal={J. Number Theory},
   volume={148},
   date={2015},
   pages={137--152},
   issn={0022-314X},
   review={\MR{3283172}},
   doi={10.1016/j.jnt.2014.09.023},
}

\bib{HeTogbeZiegler}{article}{
   author={He, Bo},
   author={Togb\'{e}, Alain},
   author={Ziegler, Volker},
   title={There is no Diophantine quintuple},
   journal={Trans. Amer. Math. Soc.},
   volume={371},
   date={2019},
   number={9},
   pages={6665--6709},
   issn={0002-9947},
   review={\MR{3937341}},
   doi={10.1090/tran/7573},
}

\bib{LangNeron1959}{article}{
   author={Lang, S.},
   author={N\'eron, A.},
   title={Rational points of abelian varieties over function fields},
   journal={Amer. J. Math.},
   volume={81},
   date={1959},
   pages={95--118},
   issn={0002-9327},
   review={\MR{0102520}},
   doi={10.2307/2372851},
}

\bib{Najman2009}{article}{
   author={Najman, Filip},
   title={Integer points on two families of elliptic curves},
   journal={Publ. Math. Debrecen},
   volume={75},
   date={2009},
   number={3-4},
   pages={401--418},
   issn={0033-3883},
   review={\MR{2588214}},
}

\bib{Shioda1999}{article}{
   author={Shioda, Tetsuji},
   title={Mordell-Weil lattices for higher genus fibration over a curve},
   conference={
      title={New trends in algebraic geometry},
      address={Warwick},
      date={1996},
   },
   book={
      series={London Math. Soc. Lecture Note Ser.},
      volume={264},
      publisher={Cambridge Univ. Press, Cambridge},
   },
   date={1999},
   pages={359--373},
   review={\MR{1714831}},
   doi={10.1017/CBO9780511721540.015},
}

\bib{Silverman1983}{article}{
   author={Silverman, Joseph H.},
   title={Heights and the specialization map for families of abelian
   varieties},
   journal={J. Reine Angew. Math.},
   volume={342},
   date={1983},
   pages={197--211},
   issn={0075-4102},
   review={\MR{703488}},
   doi={10.1515/crll.1983.342.197},
}

\bib{Stoll2001}{article}{
   author={Stoll, Michael},
   title={Implementing 2-descent for Jacobians of hyperelliptic curves},
   journal={Acta Arith.},
   volume={98},
   date={2001},
   number={3},
   pages={245--277},
   issn={0065-1036},
   review={\MR{1829626}},
   doi={10.4064/aa98-3-4},
}

\bib{Stoll2011}{article}{
   author={Stoll, Michael},
   title={Rational points on curves},
   language={English, with English and French summaries},
   journal={J. Th\'eor. Nombres Bordeaux},
   volume={23},
   date={2011},
   number={1},
   pages={257--277},
   issn={1246-7405},
   review={\MR{2780629}},
}

\bib{Stoll-preprint}{article}{
   author={Stoll, Michael},
   title={Chabauty without the Mordell-Weil group},
   conference={
      title={Algorithmic and experimental methods in algebra, geometry, and
      number theory},
   },
   book={
      publisher={Springer, Cham},
   },
   date={2017},
   pages={623--663},
   review={\MR{3792746}},
}

\bib{programs}{misc}{
  author={Stoll, Michael},
  title={Magma scripts implementing the algorithms in this paper},
  date={2017},
  note={\newline \url{http://www.mathe2.uni-bayreuth.de/stoll/magma/index.html}},
}

\end{biblist}
\end{bibdiv}

\end{document}